\patchcmd\Gread@eps{\@inputcheck#1 }{\@inputcheck"#1"\relax}{}{}
\newtheorem{theorem}{Theorem}[section]
\newtheorem{assumption}{Assumption}[section]
\newtheorem{remark}{Remark}[section]
\newtheorem{definition}{Definition}[section]
\newtheorem{lemma}[theorem]{Lemma}
\newtheorem{proposition}[theorem]{Proposition}
\definecolor{light-gray}{gray}{0.95}
\def\centerarc[#1](#2)(#3:#4:#5){\draw[#1] ($(#2)+({#5*cos(#3)},{#5*sin(#3)})$) arc (#3:#4:#5);}
\newcommand{\vertiii}[1]{{\left\vert\kern-0.25ex\left\vert\kern-0.25ex\left\vert #1 
		\right\vert\kern-0.25ex\right\vert\kern-0.25ex\right\vert}}
\numberwithin{equation}{section}
\numberwithin{figure}{section}
\newcommand{\mc}[1]{{\mathcal #1}}
\newcommand{\bb}[1]{{\mathbb #1}}
\newcommand{\<}{\big\langle}
\renewcommand{\>}{\big\rangle}
\renewcommand{\epsilon}{\varepsilon}
\newcommand{\R}{\mathbb R}
\newcommand{\Z}{\mathbb Z}
\newcommand{\N}{\mathbb N}
\renewcommand{\P}{\mathbb P}
\newcommand{\E}{\mathbb E}
\newcommand{\gen}{\mathscr{L}}
\newcommand{\gens}{\mathscr{S}}
\newcommand{\A}{\mathcal{A}}
\newcommand{\B}{\mathcal{B}}
\newcommand{\Lcal}{\mathcal{L}}
\newcommand{\Scal}{\mathcal{S}}
\newcommand{\kbf}{\mathbf{k}}
\newcommand{\Rbf}{\mathbf{R}}
\newcommand{\Qbf}{\mathbf{Q}}
\newcommand{\qbf}{\mathbf{q}}
\newcommand{\Mbf}{\mathbf{M}}
\newcommand{\mubf}{\bm{\mu}}
\newcommand{\rhobf}{\bm{\rho}}
\newcommand{\etabf}{\bm{\eta}}
\newcommand{\cov}{{\rm Cov}}
\title[Multi-species ZRP with Long Jumps]{Stationary fluctuations for a multi-species zero range process with long jumps}
\author{Linjie Zhao}
\address{(1) School of Mathematics and Statistics, Huazhong University of Science and Technology, Wuhan 430074, China. (2) Hubei Key Laboratory of Engineering Modeling and Scientific Computing, Huazhong University of Science and Technology, Wuhan 430074, China}
\email{linjie\_zhao@hust.edu.cn}
\thanks{Acknowledgments. The author thanks the financial support from the National Natural Science Foundation of China with grant number 11971038 and the Fundamental Research Funds for the Central Universities.
}
\begin{document}
	
\maketitle

\begin{abstract}
	We consider stationary fluctuations for the multi-species zero range process with long jumps in one dimension, where the underlying transition probability kernel is $p(x) = c_+ |x|^{-1-\alpha}$ if $x > 0$ and $= c_-|x|^{-1-\alpha}$ if $x < 0$. Above, $c_{\pm} \geq 0, \alpha > 0$ are parameters. We prove that for $0 < \alpha < 3/2$, the density fluctuation fields converge to the stationary solution of a coupled fractional Ornstein-Uhlenbeck process, and for $\alpha=3/2$,  the limit points are concentrated on stationary energy solutions to a coupled fractional Burgers equation.   \\
	
	\noindent \emph{Keywords:}   stationary fluctuations; coupled fractional Burgers equation; multi-species zero range process; long jumps. 
\end{abstract}

\section{Introduction}

Over the last years, since the seminal work \cite{gonccalves2014nonlinear}, the universality class from Gaussian to KPZ has been well understood for one dimensional interacting particle systems with one conservation law. Roughly speaking, for weakly asymmetric particle systems with weak asymmetry $N^{-\gamma}$, the stationary fluctuations of the process are usually governed by the Ornstein-Uhlenbeck process if $\gamma > 1/2$, and by stochastic Burgers equation if $\gamma = 1/2$.  This has been verified for a large class of interacting systems, see  \cite{blondel2016convergence,diehl2017kardar,gonccalves2017second} for example.  However, for one dimensional interacting particle systems with several conservation laws, the situation is much more complicated. The mode coupling theory developed in \cite{popkov2015universality,spohn2014nonlinear,spohn2015nonlinear} predicts  different universality classes from Gaussian and KPZ.  However, the analysis is not rigorous and density fluctuations for systems with several conservation laws have only been rigorously proved for a few models, see \cite{bernardin2021derivation,butelmann2022scaling,gonccalves2023derivation,goncalves2022on} and references therein. 

In \cite{gonccalves2018density}, Gon{\c c}alves and Jara considered stationary fluctuations for the exclusion process with long jumps, from which they derived fractional Ornstein-Uhlenbeck process and fractional Burgers equation. Inspired by their work, we aim to extend the result to particle systems with several conservation laws and to derive coupled fractional Ornstein-Uhlenbeck process and coupled fractional Burgers equation. For this purpose, we consider the multi-species zero range process first introduced in \cite{grosskinsky2003stationary}. Stationary fluctuations for this model in the weakly asymmetric case were considered by Bernardin, Funaki and Sethuraman in \cite{bernardin2021derivation}, where they derived coupled stochastic Burgers equation. We focus on the asymmetric case with long jumps. 

In the multi-species zero range process, there are $n \geq 1$ types of particles. For $1 \leq i \leq n$, an $i$--type particle jumps from $x$ to $y$ at rate $g_i (\etabf (x)) c_+ |y-x|^{-1-\alpha}$ if $y > x$, and  at rate $g_i (\etabf (x)) c_- |y-x|^{-1-\alpha}$ if $y < x$. Above, $g_i : \N^n \rightarrow \R_+$ is the jump rate for $i$--type particles, $\etabf (x) = (\eta^i (x))_{1 \leq i \leq n}$ with $\eta^i (x)$ being the number of $i$--type particles at site $x$ and $c_{\pm} \geq 0, \alpha > 0$ are parameters.  We shall prove that when the process starts from its stationary measure, if $0 < \alpha < 3/2$, then the density fluctuation fields converge to a coupled fractional Ornstein-Uhlenbeck process; if $\alpha = 3/2$, then the density fluctuation fields are tight, and any of the limit points is an stationary energy solution to a coupled fractional Burgers equation.  

As far as we know, this is the first attempt to derive coupled fractional Burgers equation from particle systems. Different from particle systems with only one conservation law, the difficulty comes from the fact that the reference frames associated with each conserved quantity are different if there are several conservation laws. Thus, as in \cite{bernardin2021derivation}, we have to assume the frame condition if $\alpha \geq 1$ so that the reference frames are the same for each conserved quantity. Compared with the model in \cite{bernardin2021derivation}, the process with long jumps is superdiffusive if $0 < \alpha < 2$ instead of diffusive. As a consequence,  the estimates for the second order Boltzmann-Gibbs principle,  first proposed in \cite{gonccalves2014nonlinear} as the main tools when considering nonlinear stationary fluctuations, are different.

The rest of the article is organized as follows. In Section \ref{sec:result}, we introduce the model and state stationary fluctuations for the density fluctuation fields. Section \ref{sec:bgp} is devoted to the proof of the second order Boltzmann--Gibbs principle. In Sections \ref{sec:tightness} and \ref{sec:clp} we prove tightness of the density fluctuation fields and characterize the limit points of the fields respectively.    

\section{Notation and Results}\label{sec:result}

In Subsections \ref{subsec:model}, \ref{subsec:sm} and \ref{subsec:sg}, we introduce the model and state its properties including stationary measures and spectral gap estimates. In Subsections \ref{subsec:ou} and \ref{subsec:fbe} we introduce the definition of the solutions to the limiting processes. The main result of this article is stated in Subsection \ref{subsec:sf}.  

\subsection{Multi-species zero range process with long jumps.}\label{subsec:model} The state space of the multi-species zero range process is $\Omega := \big(\N^n\big)^{\Z}$, where  the integer $n\geq 1$ is the number of species of the particles, and $\N = \{0,1,2,\ldots\}$.  For $1 \leq i \leq n$ and $x \in \Z$, denote $\eta^i (x)$ the number of $i$--type particles at site $x$. Then, a configuration $\etabf \in \Omega$ could be written as
\[\etabf = (\etabf (x))_{x \in \Z} = \big((\eta^i(x))_{1 \leq i \leq n}\big)_{x \in \Z}.\]
For $1 \leq i \leq n$, let $g_i: \N^n \rightarrow \R_+:=[0,\infty)$ be the jump rate of  $i$--type particles. To avoid degeneracy, we assume that for any $1 \leq i \leq n$, and for $\kbf=(k^i)_{1 \leq i \leq n} \in \N^n$, 
\[g_i (\kbf)=0 \quad\text{if and only if}\quad k^i = 0, \quad\text{and}\quad g_i^*:= \inf_{k^i \geq 1} g_i (\kbf) > 0.\]

Let $p(\cdot)$ be some transition  kernel on $\Z$ such that $p(0)=0$. We say a function $f: \Omega \rightarrow \R$ is local if the value of $f$ depends only on a finite number of sites. The generator $\gen$ of the process  acts on local functions $f: \Omega \rightarrow \R$ as
\[\gen f (\etabf) = \sum_{x,y \in \Z} \sum_{i=1}^n g_i (\etabf (x)) p(y-x) \big[f(\etabf^{i;x,y}) - f(\etabf)\big], \]
 where the configuration $\etabf^{i;x,y}$ is the one obtained from $\etabf$ after an $i$--type particle jumps from site $x$ to $y$ if there is at least one $i$--type particle at site $x$ in the configuration $\etabf$, \emph{i.e.},  $(\etabf^{i;x,y})^j (z) = \etabf^j (z) + \delta_{i,j} \big(\delta_{z,y} - \delta_{z,x}\big)$ for $1 \leq j \leq n$ and for $z \in \Z$.  As usual, we denote $\delta_{i,j}$ the Delta function.  For $\kbf \in \N^n$, denote \[\kbf^{i,\pm} = (k^1,\ldots,k^{i-1},k^i \pm 1,k^{i+1},\ldots,k^n).\]
 In order for the process to be well defined, we need to assume 
 \[\sup_{1 \leq i,j \leq n} \sup_{\kbf \in \N^n} \big| g_i (\kbf^{j,+}) - g_i (\kbf)\big| < \infty.\]
 We refer the readers to \cite{Andjel82} for construction of the process in the case $n=1$, which could be extended to $n \geq 2$ directly.
 
In this article, we are interested in the process with long jumps (not necessarily symmetric). To this end, let $\alpha \in (0,2)$ and $c_{\pm} \geq 0$ such that $c_+ + c_- > 0$.  Throughout the paper, we take the transition kernel to be
 \[p(z) = \begin{cases}
\frac{c_+}{|z|^{1+\alpha}} , \quad &z > 0;\\
0, \quad &z = 0;\\
\frac{c_-}{|z|^{1+\alpha}} , \quad &z < 0.
 \end{cases}\]
Denote $m:=\sum_{z \in \Z} z p(z)$.

\subsection{Stationary measures.}\label{subsec:sm} In general, the stationary measures for the multi-species zero range process is implicit. As in \cite{grosskinsky2003stationary}, we impose the following conditions on the jump rate function so that the model  has product stationary  measures.

\begin{assumption}\label{assum:stationary}
For any $1 \leq i,j \leq n$ and for  $\kbf=(k^i)_{1 \leq i \leq n} \in \N^n$ such that $k^i,k^{j} \geq 1$, assume
\[g_i (\kbf)g_{j}(\kbf^{i,-}) = g_{j} (\kbf) g_i(\kbf^{j,-}).\]
\end{assumption}

\begin{remark}\label{rmk:G}
	The above assumption is equivalent to the existence of a function $G:\N^n \rightarrow \R$ such that for any $1 \leq i \leq n$, for any $\kbf \in \N^n$ such that $k^i \geq 1$,
	\[\log g_i (\kbf) = G(\kbf) - G(\kbf^{i,-}).\]
Indeed, one could choose
\[G(\kbf) = \sum_{m^1=0}^{k^1} \log g_1 (m^1,0,0,\ldots,0) + \sum_{m^2=0}^{k^2} \log g_2 (k^1,m^2,0,\ldots,0) + \ldots + \sum_{m^n=0}^{k^n} \log g_n (k^1,k^2,\ldots,k^{n-1},m^n).\]
\end{remark}

For  $\mubf = (\mu^i)_{1 \leq i \leq n} \in \N^n$, define the partition function 
\[Z (\mubf) = \sum_{\kbf \in \N^n} \exp \Big\{  -G(\kbf) + \sum_{i=1}^n \mu^i k^i   \Big\}.\]
Let $D_{\mubf}$ be the domain of convergence of the partition function.  For $\mubf \in D_{\mubf}$, define $\bar{\nu}_{\mubf}$ as the product measure on $\Omega$ with marginals
\[\bar{\nu}_{\mubf} \big(\etabf(x) = \kbf \big) = \frac{1}{Z(\mubf)} \exp \Big\{  -G(\kbf) + \sum_{i=1}^n \mu^i k^i   \Big\}, \quad x \in \Z, \; \kbf \in \N^n.\]
Then, the process is stationary with respect to the measure $\bar{\nu}_{\mubf}$.  In order that $D_{\mubf}$ contains a neighborhood of $\mathbf{0}$, we need the following assumption.

\begin{assumption}\label{assump:inv}
Assume
\[\liminf_{|\kbf| \rightarrow \infty} \frac{G(\kbf)}{|\kbf|} > -\infty.\]
\end{assumption}

For $1 \leq i \leq n$, the density of the $i$--type particles with respect to $\bar{\nu}_{\mubf}$ is 
\[R^i (\mubf) = E_{\bar{\nu}_{\mubf}} [\eta^i (x)] = \partial_{\mu^i} \log Z(\mubf) \geq  0.\]
The particle density $\Rbf=(R^i)_{1 \leq i \leq n}: \mathring{D}_{\mubf} \rightarrow D_{\rhobf}$ is well defined, where $D_{\rhobf} = \Rbf (\mathring{D}_{\mubf}) \subset \R_+^n$ is the range of the mapping $\Rbf$. The compressibility is given by 
\[D\Rbf (\mubf) := \Big(\partial_{\mu^j}R^i (\mubf)\Big)_{1 \leq i,j \leq n} = \Big(\partial_{\mu^j}\partial_{\mu^i} \log Z(\mubf)\Big)_{1 \leq i,j \leq n} = \Big( {\rm Cov}_{\bar{\nu}_{\mubf}} \big( \eta^i (0),\eta^j (0) \big)\Big)_{1 \leq i,j \leq n}.\]
Thus, $D\Rbf (\mubf)$ is symmetric and positive definite, which implies $\Rbf$ is invertible on  $\mathring{D}_{\mubf}$. Denote by $\Mbf$ the inverse of $\Rbf$. In order to index the invariant measures by particle densities, for $\rhobf \in D_{\rhobf}$, let $\nu_{\rhobf} = \bar{\nu}_{\Mbf(\rhobf)}$.  For any local function $f: \Omega \rightarrow \R$, let
\[\tilde{f} (\rhobf) = E_{\nu_{\rhobf}} [f].\]


\subsection{Spectral gap.}\label{subsec:sg}  Denote $\gen^*$ the adjoint of the generator $\gen$ in $L^2 (\nu_{\rhobf})$. Then, $\gen^*$ is the generator of a multi-species zero range process with transition kernel $p(-\cdot)$. Let $\gens = (\gen+\gen^*)/2$, which corresponds to a  multi-species zero range process with transition kernel
\[s(x) = \frac{p(x) + p(-x)}{2} = \frac{c_++c_-}{2 |x|^{1+\alpha}}.\]
For integers $\ell > 0$, let $\Lambda_\ell = [-\ell,\ell] \cap \Z$ be the box of length $2\ell+1$ centered at the origin.  For $\kbf \in \N^n$, let $\gens_{\kbf,\ell}$ be the restriction of the generator $\gens$ to the box $\Lambda_\ell$ with $\kbf$ particles, i.e., for function $f: (\N^n)^{\Lambda_\ell} \rightarrow \R$ and for $\etabf$ such that $\sum_{x \in \Lambda_\ell} \etabf(x) = \kbf$,
\[\gens_{\kbf,\ell} f (\etabf) = \sum_{x,y \in \Lambda_\ell} \sum_{i=1}^n g_i (\etabf (x)) s(y-x) \big[f(\etabf^{i;x,y}) - f(\etabf)\big].\]

Since the process with generator $\gens_{\kbf,\ell}$ is irreducible, it has a unique invariant measure, which is denoted by $\nu_{\kbf,\ell}$. Moreover,
\[\nu_{\kbf,\ell} (\cdot)= \nu_{\rhobf} \big(\cdot|\sum_{x \in \Lambda_\ell} \etabf (x) = \kbf \big)\]
does not depend on the density $\rhobf$, and  it is also reversible for the generator $\gens_{\kbf,\ell}$.  Denote $W(\kbf,\ell)$ the inverse of the spectral gap of $-\gens_{\kbf,\ell}$, \emph{i.e.},
\[W(\kbf,\ell)^{-1} = \inf_{f} \frac{D_{\kbf,\ell} (f)}{{\rm Var}_{\nu_{\kbf,\ell}} (f)},\]
where $D_{\kbf,\ell}$ is the canonical Dirichlet form,
\[D_{\kbf,\ell} (f) = \<f(-\gens_{\kbf,\ell})f\>_{\nu_{\kbf,\ell}} = \frac{1}{2}\sum_{x,y \in \Lambda_\ell} \sum_{i=1}^n  s(y-x) E_{\nu_{\kbf,\ell}} \Big[ g_i (\etabf (x)) \big\{f(\etabf^{i;x,y}) - f(\etabf)\big\}^2 \Big].\]
Above, for some probability measure $\mu$, we  use the notation  $\<\cdot \>_\mu = E_{\mu} [\,\cdot\,]$. Throughout the article, we need the following lower bound on the spectral gap.

\begin{assumption}[Spectral gap]\label{assump:sg}
There exists some constant $C=C(\rhobf)$ such that
\[E_{\nu_{\rhobf}} \Big[W\Big(\sum_{x \in \Lambda_\ell} \etabf (x),\ell\Big)^2\Big] \leq C \ell^{2\alpha}.\]
\end{assumption} 

\begin{remark}
	A sufficient condition for the above assumption to hold is  
	\[\inf_{1 \leq i \leq n} \inf_{\kbf \in \N^n} \big\{ g_i (\kbf^{i,+}) - g_i (\kbf)\big\} > \varepsilon_0\]
	for some $\varepsilon_0 > 0$. Indeed,  by the proof of \cite[Lemma 2.2]{bernardin2021derivation} and \cite[Theorem 1.5]{jara2018spectral}, one could show that there exists some constant $C$ such that
	\[W(\kbf,\ell) \leq C \ell^\alpha\]
	uniformly in $\kbf \in \N^n$.  
\end{remark}

\subsection{The Ornstein-Uhlenbeck process}\label{subsec:ou} Denote $\mc{S} (\R)$ the usual Schwartz space and $\mc{S}^\prime (\R)$ the space of tempered distributions. For $H \in \mc{S} (\R)$, define the operator $\Lcal: \mc{S} (\R) \rightarrow L^2 (\R)$ as
\begin{equation}\label{l}
	\Lcal H (u) = \int_{\R} \frac{c_+ \mathbf{1}_{\{v > 0\}} + c_-\mathbf{1}_{\{v < 0\}}}{|v|^{1+\alpha}} \Big(H (u+v) - H (u) - \theta^\alpha (v) H' (u)\Big) dv,
\end{equation}
where
\[\theta^\alpha (v) = \begin{cases}
	0, \quad &\alpha < 1;\\
	v\mathbf{1}_{\{|v|\leq 1\}}, \quad &\alpha=1;\\
	v,\quad &\alpha >1.
\end{cases}\]
Denote by $\Lcal^*$ the adjoint of $\Lcal$ in $L^2 (\R)$ and let $\Scal = (\Lcal + \Lcal^*) /2$. More precisely, for  
$H \in \mc{S} (\R)$,
\begin{align*}
	\Lcal^* H (u) &= \int_{\R} \frac{c_+ \mathbf{1}_{\{v < 0\}} + c_-\mathbf{1}_{\{v > 0\}}}{|v|^{1+\alpha}} \Big(H (u+v) - H (u) - \theta^\alpha (v) H' (u)\Big) dv,\\
	\Scal H (u) &= \frac{c_++c_-}{2}\int_{\R} \frac{1}{|v|^{1+\alpha}} \Big(H (u+v) - H (u) \Big) dv.
\end{align*}
Note that the operator $\Scal$ is a constant multiple of the fractional Laplacian $-(-\Delta)^{\alpha/2}$.

\begin{remark}\label{rmk:L}
We underline that for general $H \in \mc{S} (\R)$, $\Lcal H$ may not belong to $\mc{S} (\R)$. However, by \cite[Proposition 2.4]{gonccalves2018density}, the operator $\Lcal: \mc{S} (\R) \rightarrow L^2 (\R)$ is continuous. Moreover, for $H \in \mc{S} (\R)$, $\Lcal H$ is bounded and infinitely differentiable. 
\end{remark}

Throughout  the article, we fix a time horizon $T > 0$. For $H,G \in L^2(\R)$, denote
\[\<H,G\> = \int_\R H(u) G(u) du.\]
Denote by $\dot{\mc{W}}_t = \big(\dot{\mc{W}}_t^i\big)_{1 \leq i \leq n}$ the standard $\mc{S}' (\R)^n$--valued space--time white noise with covariance function given by
\[\cov \Big( \dot{\mc{W}}_t^i (H), \dot{\mc{W}}_s^j (G)\Big) = \delta_{i,j} \delta_{s,t} \<H,G\> \]
for any $1 \leq i,j \leq n$, any $0 \leq s,t \leq T$ and any $H,G \in \mc{S} (\R)$. Define
\begin{align*}
	\Qbf (\rhobf) &= \big(Q_{i,j} (\rhobf)\big)_{1 \leq i,j \leq n} = \big( (\partial_{\rho^{j}} \tilde{g}_i) (\rhobf) \big)_{1 \leq i,j \leq n},\\
	\mathbf{P} (\rhobf)  &= {\rm diag} \big( (\partial_{\rho^{i}}\tilde{g}_i) (\rhobf) \big)_{1 \leq i \leq n},\\
	\qbf (\rhobf) &= {\rm diag} \big( \sqrt{\tilde{g}_i (\rhobf)}\big)_{1 \leq i \leq n}.
\end{align*}

Next, we introduce the definition of the stationary solution to a family of fractional Ornstein-Uhlenbeck processes. We say  the $\mc{S}' (\R)^n$--valued process $\{\mc{Y}_t, 0 \leq t \leq T\}$ is stationary if for any time $0 \leq t \leq T$, $\mathcal{Y}_t$ is Gaussian  with covariance, for any $1 \leq i,j \leq n$ and any $H,G \in \mc{S} (\R)$,
\[\cov\big( \mc{Y}^i_t (H), \mc{Y}_t^j (G) \big) = \Gamma_{i,j} (\rhobf) \<H,G\>\]
where $\Gamma_{i,j} (\rhobf) = E_{\nu_{\rhobf}} [(\eta^i (0) - \rho^i) (\eta^j (0) - \rho^j)]$. For any trajectory $H: [0,T] \rightarrow \mc{S} (\R)$, we also denote $H_s (\cdot) = H(s,\cdot)$. 

\begin{definition}\label{def:ou} 
{\rm (1)  (The case $0 < \alpha < 1$).} We say $\{\mc{Y}_t, 0 \leq t \leq T\}$ is a stationary solution of the equation
\begin{equation}\label{ou1}
	\partial_t \mc{Y}_t = \Qbf (\rhobf) \Lcal^* \mc{Y}_t + 2 \qbf (\rhobf) \sqrt{-\Scal} \dot{\mc{W}}_t, 
\end{equation}
or componentwise, for any $1 \leq i \leq n$,
\[\partial_t \mc{Y}_t^i =  \sum_{j=1}^{n} (\partial_{\rho^{j}} \tilde{g}_i) (\rhobf) \Lcal^* \mc{Y}^j_t + 2  \sqrt{\tilde{g}_i (\rhobf) (-\Scal)} \dot{\mc{W}}^i_t, \]
if the following conditions are satisfied:
\begin{itemize}
	\item the process  $\{\mc{Y}_t, 0 \leq t \leq T\}$ is stationary;
	\item for any $1 \leq i \leq n$ and for any trajectory $H: [0,T] \rightarrow \mc{S} (\R)$,
	\begin{equation}\label{msub1}
		\mc{M}_t^{i} (H) = \mc{Y}_t^{i} (H_t) - \mc{Y}_0^{i} (H_0)  - \int_0^t \sum_{j=1}^n (\partial_{\rho^{j}} \tilde{g}_i) (\rhobf) \mc{Y}_s^{j} ((\partial_s + \Lcal) H_s) \,ds
	\end{equation}
	is a continuous martingale with quadratic variation 
	\[4 \tilde{g}_i (\rhobf) \int_0^t \<H_s,(-\Scal) H_s\> ds.\]
	Moreover, for $1 \leq i \neq j \leq n$, the martingales $\mc{M}_\cdot^{i}$ and $\mc{M}_\cdot^{j}$ are independent.
\end{itemize}

{\rm (2)  (The case $1 \leq  \alpha < 3/2$).} We say $\{\mc{Y}_t, 0 \leq t \leq T\}$ is a stationary solution of the equation
\begin{equation}\label{ou2}
	\partial_t \mc{Y}_t = \mathbf{P}  (\rhobf)  \Lcal^* \mc{Y}_t + 2 \qbf (\rhobf) \sqrt{-\Scal} \dot{\mc{W}}_t, 
\end{equation}
or componentwise, for any $1 \leq i \leq n$,
\[\partial_t \mc{Y}_t^i =   (\partial_{\rho^{i}} \tilde{g}_i) (\rhobf) \Lcal^* \mc{Y}^i_t + 2  \sqrt{\tilde{g}_i (\rhobf) (-\Scal)} \dot{\mc{W}}^i_t, \]
if the martingale in \eqref{msub1} is replaced with 
\begin{equation}
\mc{M}_t^{i} (H) = \mc{Y}_t^{i} (H_t) - \mc{Y}_0^{i} (H_0)  - (\partial_{\rho^{i}} \tilde{g}_i) (\rhobf) \int_0^t  \mc{Y}_s^{i} ((\partial_s + \Lcal) H_s) \,ds.
\end{equation}
Note that in this case, the processes $\{\mc{Y}_t^i\}$, $1 \leq i \leq n$, are decoupled.
\end{definition}

\begin{remark}
By direct calculations, for $H \in \mc{S} (\R)$, 
\[\<H,(-\Scal) H\> = \frac{c_++c_-}{4}\iint_{\R^2} \frac{1}{|v|^{1+\alpha}} \big(H (u+v) - H (u) \big)^2 \,du\, dv\]
\end{remark}

\begin{remark}
As stated in Remark \ref{rmk:L}, for $H \in \mc{S} (\R)$, $\mathcal{Y}_t (\Lcal H)$ may not be well defined since $\Lcal H$ may not belong to $\mc{S} (\R)$. However, by continuity of the operator $\Lcal: \mc{S} (\R) \rightarrow L^2 (\R)$, one could first approximate $\Lcal H$ in $L^2 (\R)$ by some function $(\Lcal H)_\varepsilon \in \mc{S} (\R)$, and then define $\mathcal{Y}_t (\Lcal H)$ as the limit of $\mathcal{Y}_t \big((\Lcal H)_\varepsilon\big)$ by using the stationary of the process $\mc{Y}_t$. We refer the readers to \cite[Subsection 2.3]{gonccalves2018density} for detailed implement of the above approximation procedure.
\end{remark}

\begin{remark}
	The stationary solutions to \eqref{ou1} and \eqref{ou2} are unique in the sense of distributions. We refer the readers to \cite[Proposition C.1]{gonccalves2018density} for the proof of the uniqueness in the case $n=1$, which could be easily extended to $n \geq 2$.
\end{remark}

\subsection{The coupled  fractional Burgers equation}\label{subsec:fbe}  For $\varepsilon > 0$, define $\iota: \R \rightarrow \R_+$ as
\begin{equation}\label{iota}
\iota_\varepsilon (u) = \frac{1}{2 \varepsilon} \mathbf{1}_{[-\varepsilon,\varepsilon]} (u), \quad u \in \R.
\end{equation}
Let $G_\varepsilon \in \mc{S} (\R)$ be an approximation of the function $\iota (\cdot)$ such that
\begin{equation}
	\lim_{\varepsilon \rightarrow 0} \varepsilon^{-1} \|G_\varepsilon - \iota_\varepsilon\|_{L^2 (\R)}^2 = 0, \qquad \|G_\varepsilon\|_{L^2 (\R)}^2 \leq 2 \|\iota_\varepsilon\|_{L^2 (\R)}^2 = \varepsilon^{-1}.
\end{equation}
An example is to define $G_\varepsilon$ as the convolution of $\iota (\cdot)$ with smooth kernels.

Let $\{\mc{Y}_t, 0 \leq t \leq T\}$ be an $\mc{S}' (\R)^n$--valued stationary process. For $H \in \mc{S} (\R)$, for $1 \leq i \leq n$, define
\begin{equation}\label{a_epsilon}
\mc{A}^{i,\varepsilon}_t (H) = \sum_{j,k=1}^n \big(\partial_{\rho^{j}} \partial_{\rho^{k}} \tilde{g}_i\big) (\rhobf)  \int_0^t \int_{\R} \tau_{-u} \mc{Y}_s^j (G_\varepsilon) \tau_{-u} \mc{Y}_s^k (G_\varepsilon)  H^\prime (u)\,du\,ds,
\end{equation}
where the shift operators $\tau_u, u \in \R$, are defined as
\[\tau_u \mc{Y}^i_t (H) = \mc{Y}^i_t (\tau_u H), \quad \tau_u H (v) = H(u+v)\]
for $\mc{Y}^i_t \in \mc{S}^\prime (\R), H \in \mc{S} (\R)$. To abuse notations, we also define $\tau_x$, $x \in \Z$, which act  $f: \Omega \rightarrow \R$ and $\etabf \in \Omega$ as
\[\tau_x f (\etabf) = f (\tau_x \etabf), \quad \tau \etabf (y) = \etabf(x+y), \;y \in \Z.\]
We say the process $\{\mc{Y}_t, 0 \leq t \leq T\}$  satisfies the {\it $L^2$ energy condition} if for any $H \in  \mc{S} (\R)$ and for any $1 \leq i \leq n$, the sequence $\{\mc{A}^{i,\varepsilon}_t (H) , 0 \leq t \leq T\}_{\varepsilon > 0}$ is a uniform $L^2$ Cauchy sequence in the following sense,
\begin{equation}\label{energy_cdondition}
\lim_{\varepsilon \rightarrow 0} \sup_{\delta < \varepsilon} \E \Big[ \sup_{0 \leq t \leq T} \big(\mc{A}^{i,\varepsilon}_t (H) - \mc{A}^{i,\delta}_t (H)\big)^2\Big] = 0,
\end{equation} 
and the limit does not depend on the choice of the approximating function $\{G_\varepsilon\}$.  As a result,  for any $H \in  \mc{S} (\R)$ and for any $1 \leq i \leq n$, the limit
\[\mc{A}^{i}_t (H):=\lim_{\varepsilon \rightarrow 0} \mc{A}^{i,\varepsilon}_t (H)\]
exists in the uniform $L^2$ norm. Moreover, the process $(\mc{A}^{i}_\cdot)_{1 \leq i \leq n} \in C([0,T],\mc{S}' (\R)^n)$.

\begin{definition}\label{def:fburgers}
{\rm (The case $\alpha = 3/2$).} We say $\{\mc{Y}_t, 0 \leq t \leq T\}$ is a stationary energy solution of the coupled stochastic fractional Burgers equation,  for any $1 \leq i \leq n$,
\begin{equation}\label{fbe}
\partial_t \mc{Y}_t^i =   (\partial_{\rho^{i}} \tilde{g}_i) (\rhobf) \Lcal^* \mc{Y}^i_t + \frac{m}{2} \sum_{j,k=1}^n \big(\partial_{\rho^{j}} \partial_{\rho^{k}} \tilde{g}_i\big) (\rhobf) \nabla ( \mc{Y}^j_t \mc{Y}^k_t) +2  \sqrt{\tilde{g}_i (\rhobf) (-\Scal)} \dot{\mc{W}}^i_t, 
\end{equation}
if the following conditions are satisfied:
\begin{itemize}
	\item the process  $\{\mc{Y}_t, 0 \leq t \leq T\}$ is stationary;
	\item the process $\{\mc{Y}_t, 0 \leq t \leq T\}$ satisfies the $L^2$-energy condition. As s result, for any $1 \leq i \leq n$, there exists a process $\mc{A}^i_\cdot \in C([0,T],\mc{S} (\R))$ such that for any $H \in \mc{S} (\R)$, $\mc{A}^i_\cdot (H)$ is the uniform $L^2$ limit of $\mc{A}^{i,\varepsilon}_\cdot (H)$ as $\varepsilon \rightarrow 0$;
	\item  for any $1 \leq i \leq n$ and for any trajectory $H: [0,T] \rightarrow \mc{S} (\R)$,
	\begin{equation}
		\mc{M}_t^{i} (H) = \mc{Y}_t^{i} (H_t) - \mc{Y}_0^{i} (H_0)  - (\partial_{\rho^{i}} \tilde{g}_i) (\rhobf) \int_0^t  \mc{Y}_s^{i} ((\partial_s + \Lcal) H_s) \,ds - \frac{m}{2} \mc{A}^i_t (H) 
	\end{equation}
is a continuous martingale with quadratic variation 
\[4 \tilde{g}_i (\rhobf) \int_0^t \<H_s,(-\Scal) H_s\> ds.\]
Moreover, for $1 \leq i \neq j \leq n$, the martingales $\mc{M}_\cdot^{i}$ and $\mc{M}_\cdot^{j}$ are independent.
\end{itemize}
\end{definition}

\subsection{Stationary fluctuations}\label{subsec:sf} Let $\{\etabf_t\}$ be the process with generator $N^\alpha \gen$ and with initial distribution $\nu_{\rhobf}$.  Denote by $\P_{\nu_{\rhobf}}$ the probability measure on the path space $D([0,T],\Omega)$ associated to the process $\{\etabf_t\}$ and by $\E_{\nu_{\rhobf}}$ the corresponding expectation. We suppress the dependence of the notations on $N$ for short. 

In the case $1 \leq \alpha \leq 3/2$, we need in addition the following frame condition so that the fluctuation fields  defined below are viewed in the same reference frame.  

\begin{assumption}[Frame condition]\label{assump:frame}
	Assume the density $\rhobf$ satisfies that for any $1 \leq i \leq n$,
	\[\lambda := (\partial_{\rho^{i}} \tilde{g}_i) (\rhobf),  \quad (\partial_{\rho^{j}} \tilde{g}_i) (\rhobf) = 0,\;\forall j \neq i.\]
\end{assumption}

Define
\[m^\alpha_N = \begin{cases}
	0, \quad &\alpha < 1;\\
	N \sum_{|x| \leq N} x p(x), \quad &\alpha =1;\\
	N^\alpha m, \quad &\alpha >1.
\end{cases}\]
The density fluctuation fields of the process are defined as $\mathcal{Y}^N_t = \big(\mathcal{Y}^{i,N}_t\big)_{1 \leq i \leq n} \in \mc{S}^\prime (\R)^n$, where for any $H \in \mc{S} (\R)$, 
\[ \mathcal{Y}^{i,N}_t (H) = \frac{1}{\sqrt{N}} \sum_{x \in \Z} \bar{\eta}^i_t (x) H_{\lambda,t} \big(\tfrac{x}{N}\big).\]
Above, denote $H_{\lambda,t} (\cdot) = H \big(\cdot-\tfrac{\lambda t m^\alpha_N}{N}\big)$ and $\bar{\eta}^i_t (x) = \eta^i_t (x)- \rho^i$. Note that in the case $0 < \alpha < 1$, we have $H_{\lambda,t} = H$.

Throughout the  article, we need Assumptions \ref{assum:stationary}, \ref{assump:inv} and \ref{assump:sg} if $0 < \alpha < 1$, and in addition Assumption \ref{assump:frame} if $1 \leq \alpha \leq 3/2$.

Now we are ready to state the main result of the article. 

\begin{theorem}\label{thm:main}
Let the process start from $\nu_{\rhobf}$.

{\rm (1)} If $0 < \alpha < 1$, then the sequence of processes $\{\mathcal{Y}^{N}_t, 0 \leq t \leq T\}_{N \geq 1}$ converges in distribution, as $N \rightarrow \infty$, with respect to the Skorohod topology of $D([0,T], \mc{S}^\prime (\R)^n)$ to the stationary energy solution of the Ornstein-Uhlenbeck process \eqref{ou1}.

{\rm (2)} If $1 \leq  \alpha < 3/2$, then the sequence of processes $\{\mathcal{Y}^{N}_t, 0 \leq t \leq T\}_{N \geq 1}$ converges in the sense as in {\rm (1)} to the stationary energy solution of the Ornstein-Uhlenbeck process \eqref{ou2}.

{\rm (3)} If $\alpha=3/2$, then  the sequence of processes $\{\mathcal{Y}^{N}_t, 0 \leq t \leq T\}_{N \geq 1}$ is tight with respect to the Skorohod topology of $D([0,T], \mc{S}^\prime (\R)^n)$. Moreover, any of its limit points is a stationary energy solution to the fractional Burgers equation \eqref{fbe}.
\end{theorem}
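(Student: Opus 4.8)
\emph{Proof strategy.} The plan is to follow the martingale method of Gon{\c c}alves and Jara for density fluctuations, adapted to the $n$--species setting. Fix $H\in\mc S(\R)$ and let $H_{\lambda,s}$ be its moving-frame shift (equal to $H$ when $0<\alpha<1$). By Dynkin's formula, $\mc Y^{i,N}_t(H)$ decomposes as $\mc Y^{i,N}_0(H)$ plus a martingale $\mc M^{i,N}_t(H)$ plus the time integral of the drift which, since $\gen$ conserves the number of particles of each species, equals $\tfrac1{\sqrt N}\sum_x\bar\eta^i_s(x)\,\partial_sH_{\lambda,s}(x/N)+\tfrac1{\sqrt N}\sum_x g_i(\etabf_s(x))\,\Theta^N H_{\lambda,s}(x/N)$, where $\Theta^N\phi(u):=N^\alpha\sum_w p(w)\big(\phi(u+w/N)-\phi(u)\big)$. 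A Taylor expansion and a Riemann-sum analysis of the non-local operator $\Theta^N$, using the definitions of $\theta^\alpha$ and $m^\alpha_N$, give $\Theta^N\phi(x/N)=\Lcal\phi(x/N)+\tfrac{m^\alpha_N}{N}\phi'(x/N)+o(1)$ uniformly in $x$; the middle term is precisely what the moving frame absorbs, since $\partial_sH_{\lambda,s}=-\lambda\tfrac{m^\alpha_N}{N}H'_{\lambda,s}$. Splitting $g_i=\tilde g_i(\rhobf)+(g_i-\tilde g_i(\rhobf))$, the constant part of the drift vanishes identically because $\sum_x\Theta^N\phi(x/N)=0$ by translating the summation index, up to a negligible Riemann-sum error; hence the surviving drift is that of the \emph{centered} current $g_i(\etabf(x))-\tilde g_i(\rhobf)$ tested against $\Lcal H_{\lambda,s}$ and against $\tfrac{m^\alpha_N}{N}H'_{\lambda,s}$.

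The core is to treat this surviving drift with the Boltzmann--Gibbs principle of Section \ref{sec:bgp}. To first order one replaces $g_i(\etabf(x))-\tilde g_i(\rhobf)$ by $\sum_j(\partial_{\rho^j}\tilde g_i)(\rhobf)\,\bar\eta^j(x)$. Tested against $\Lcal H_{\lambda,s}$ this produces $\sum_j(\partial_{\rho^j}\tilde g_i)(\rhobf)\,\mc Y^{j,N}_s(\Lcal H_{\lambda,s})$, which is the entire linear drift when $0<\alpha<1$ (giving \eqref{ou1}) and which collapses to $\lambda\,\mc Y^{i,N}_s(\Lcal H_{\lambda,s})$, decoupling the species, when $1\le\alpha<3/2$ by the frame condition, Assumption \ref{assump:frame} (giving \eqref{ou2}); tested against $\tfrac{m^\alpha_N}{N}H'_{\lambda,s}$ it produces $\lambda\tfrac{m^\alpha_N}{N}\mc Y^{i,N}_s(H'_{\lambda,s})$, which cancels the $\partial_sH_{\lambda,s}$ contribution to the drift. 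For $0<\alpha<3/2$ the quadratic remainder in the Boltzmann--Gibbs expansion is shown to be negligible --- this is exactly where the restriction $\alpha<3/2$ enters --- so only the linear drift survives. For $\alpha=3/2$ one needs the \emph{second-order} Boltzmann--Gibbs principle: the quadratic part of $g_i-\tilde g_i(\rhobf)$, namely $\tfrac12\sum_{j,k}(\partial_{\rho^j}\partial_{\rho^k}\tilde g_i)(\rhobf)\big(\bar\eta^j(x)\bar\eta^k(x)-\cov_{\nu_{\rhobf}}(\eta^j(0),\eta^k(0))\big)$, which carries the factor $\tfrac{m^\alpha_N}{N}=N^{1/2}m$, is shown to equal in the uniform $L^2$ sense the field-level functional $\tfrac m2\,\mc A^{i,\varepsilon}_s(H_{\lambda,s})$ of \eqref{a_epsilon} (the box average $\iota_\varepsilon$ being traded for $G_\varepsilon$ via the bound on $G_\varepsilon-\iota_\varepsilon$); the same estimate yields the $L^2$-energy condition \eqref{energy_cdondition} and the existence of the limiting process $\mc A^i_\cdot$.

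With the drift under control, tightness of $\{\mc Y^N_\cdot\}_N$ in $D([0,T],\mc S'(\R)^n)$ follows from Mitoma's criterion: for each fixed $H$ and $i$ one verifies Aldous' criterion for $\mc Y^{i,N}_\cdot(H)$, using the convergence of $\langle\mc M^{i,N}(H)\rangle_t$ to $4\tilde g_i(\rhobf)\int_0^t\langle H_s,(-\Scal)H_s\rangle\,ds$ for the martingale part, the Boltzmann--Gibbs $L^2$ bounds for the drift part, stationarity, and the moment estimates on $\mc Y^{i,N}_t(H)$ coming directly from the product measure $\nu_{\rhobf}$. To characterize the limit points one passes to the limit in the Dynkin decomposition: any limit point $\{\mc Y_t\}$ is stationary with the asserted Gaussian one-time marginal (by the central limit theorem under $\nu_{\rhobf}$, with covariance $\Gamma_{i,j}(\rhobf)$), satisfies the $L^2$-energy condition, and yields a continuous martingale $\mc M^i_t(H)$ with the prescribed quadratic variation and cross-species independence; for $0<\alpha<3/2$ the quadratic drift vanishes, giving Definition \ref{def:ou}, and for $\alpha=3/2$ it converges to $\tfrac m2\,\mc A^i_t(H)$, giving Definition \ref{def:fburgers}. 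Finally, in parts (1) and (2), uniqueness in law of the stationary solution to \eqref{ou1}, resp.\ \eqref{ou2} (the Remark after Definition \ref{def:ou}, extending \cite[Proposition C.1]{gonccalves2018density}), upgrades tightness to convergence in distribution.

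The step expected to be the main obstacle is the second-order Boltzmann--Gibbs principle in the \emph{superdiffusive, non-local} regime. One must show that the time integral $\int_0^t\tfrac1{\sqrt N}\sum_x\tau_x V_i(\etabf_s)\,H_{\lambda,s}(x/N)\,ds$, where $V_i$ is the centered degree-two part of $g_i-\tilde g_i(\rhobf)$ above, coincides in uniform $L^2$, up to an error tending to $0$, with the same integral in which $\tau_x V_i$ is replaced by a quadratic functional of the block-averaged fields. This is carried out by a multi-scale (``doubling of blocks'') argument in which the diffusively-scaled block lengths of the classical proof are replaced by $\alpha$-dependent lengths, and whose only spectral input is Assumption \ref{assump:sg}, that is $\E_{\nu_{\rhobf}}[W(\sum_{x\in\Lambda_\ell}\etabf(x),\ell)^2]\le C\ell^{2\alpha}$, rather than the $\ell^2$ bound available in the diffusive case. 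Arranging the exponents to close exactly at $\alpha=3/2$ --- so that the nonlinearity is simultaneously non-trivial and $L^2$-controllable, while the additional error terms generated by the genuinely long ``bulk'' jumps (absent from the diffusive model of \cite{bernardin2021derivation}) are $o(1)$ --- is the delicate point, and is the content of Section \ref{sec:bgp}.
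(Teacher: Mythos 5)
Your proposal is correct and follows essentially the same route as the paper: Dynkin decomposition in the moving frame, first- and second-order Boltzmann--Gibbs estimates proved by a multi-scale doubling-of-blocks argument whose only spectral input is Assumption \ref{assump:sg}, the frame condition to decouple species for $\alpha\geq 1$, Mitoma tightness, martingale characterization of limit points with the quadratic term converging to the energy functional at $\alpha=3/2$, and uniqueness of the stationary Ornstein--Uhlenbeck solutions to upgrade tightness to convergence in cases (1) and (2). The only cosmetic difference is that you verify tightness of the real-valued coordinates via Aldous' criterion, while the paper uses Kolmogorov--Centsov bounds for the drift terms and Whitt's martingale convergence theorem, which changes nothing of substance.
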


\section{The Boltzmann-Gibbs Principle}\label{sec:bgp}

For $\etabf \in \Omega$ and integers $\ell \geq 1$, let $\etabf^{(\ell)} (x)= \big(\eta^{i,(\ell)} (x) \big)_{1 \leq i \leq n}$ be the spatial average of $\etabf$ in a box of length $2\ell+1$ centered at site $x$, i.e., 
\[\eta^{i,(\ell)} (x) = \frac{1}{2\ell+1} \sum_{|y-x| \leq \ell} \eta^{i} (y).\]
Similarly, define
\[\bar{\eta}^{i,(\ell)} (x) = \frac{1}{2\ell+1} \sum_{|y-x| \leq \ell} \bar{\eta}^{i} (y).\]

The aim of this section is to prove the following estimate.

\begin{proposition}[Boltzmann-Gibbs Principle]\label{prop:bg}
	For any $H: \R \rightarrow \R$ and any local function $f: \Omega \rightarrow \R$ supported on sites in $\Lambda_{\ell_0}$ for some $\ell_0 > 0$, there exists some constant $C=C(\rhobf,\ell_0,\alpha)$  such that if $\tilde{f} (\rhobf) = 0$, then
	\begin{multline}\label{bg1}
	 \E_{\nu_{\rhobf}} \Big[\sup_{0 \leq t \leq T} \Big( \int_0^t \sum_{x \in \Z} \Big[\tau_x f (\etabf_s)  - \sum_{j=1}^n (\partial_{\rho^{j}} \tilde{f}) (\rhobf) \bar{\eta}^{j,(\ell)}_s (x) \Big] H_{\lambda,s} \big(\tfrac{x}{N}\big) ds \Big)^2 \Big] \\
	 \leq C  \|f\|_{L^5 (\nu_{\rhobf})}^2 \Big\{ \frac{T \ell^\alpha}{N^{\alpha-1}} \Big( \frac{1}{N} \sum_{x \in \Z} H \big(\tfrac{x}{N}\big)^2 \Big)  + \frac{T^2 N^2}{\ell^2}\Big( \frac{1}{N} \sum_{x \in \Z} |H \big(\tfrac{x}{N}\big)| \Big)^2 \Big\}.
	\end{multline}	 
If  $\tilde{f} (\rhobf) = \nabla \tilde{f} (\rhobf) = 0$, then 
	\begin{multline}\label{bg2}
	\E_{\nu_{\rhobf}} \Big[\sup_{0 \leq t \leq T} \Big( \int_0^t \sum_{x \in \Z} \Big[\tau_x f (\etabf_s)  \\
	- \frac{1}{2} \sum_{j,k=1}^n (\partial_{\rho^{j}} \partial_{\rho^{k}}  \tilde{f}) (\rhobf) \big\{\bar{\eta}^{j,(\ell)}_s (x)  \bar{\eta}^{k,(\ell)}_s (x) - (2\ell+1)^{-1} \Gamma_{j,k} (\rhobf) \big\}\Big] H_{\lambda,s} \big(\tfrac{x}{N}\big) ds \Big)^2 \Big] \\
	\leq C  \|f\|_{L^5 (\nu_{\rhobf})}^2 \Big\{ \frac{T\beta_\alpha (\ell)}{N^{\alpha-1}}  \Big( \frac{1}{N} \sum_{x \in \Z} H \big(\tfrac{x}{N}\big)^2 \Big) + \frac{T^2N^2}{\ell^3} \Big( \frac{1}{N} \sum_{x \in \Z} |H \big(\tfrac{x}{N}\big)| \Big)^2 \Big\},
\end{multline}
where
\[\beta_\alpha (\ell) = \begin{cases}
	\ell^{\alpha-1}, \quad &\alpha > 1;\\
	(\log \ell)^2, \quad &\alpha = 1;\\
	1,\quad &0 < \alpha < 1.
\end{cases}\]
Recall $\Gamma_{j,k} (\rhobf)$ is the covariance of $\eta^j (0)$ and $\eta^k (0)$ with respect to $\nu_{\rhobf}$.
\end{proposition}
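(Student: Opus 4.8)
\emph{Strategy.} The plan is to combine a dyadic multi-scale decomposition with the spectral gap bound of Assumption~\ref{assump:sg} and a Kipnis--Varadhan type $H_{-1}$ inequality, following the scheme of \cite{gonccalves2014nonlinear,gonccalves2018density}. Write $L_k=2^k\ell_0$, $\Lambda_L(x)=x+\Lambda_L$, and for $L\ge\ell_0$ let $\Psi_L(x):=\big(E_{\nu_{\kbf,L}}[f]\big)\big|_{\kbf=\sum_{y\in\Lambda_L(x)}\etabf(y)}$ be the conditional expectation of $\tau_x f$ given the number of particles in $\Lambda_L(x)$. Because $f$ is supported in the small box, one checks the tower relation $E_{\nu_{\rhobf}}[\Psi_{L_k}(x)\,|\,\etabf^{(L_{k+1})}(x)]=\Psi_{L_{k+1}}(x)$, so for $\ell=L_K\ge\ell_0$ (the general $\ell$ follows by one extra comparison)
\[
\tau_x f-\Psi_\ell(x)=\big(\tau_x f-\Psi_{\ell_0}(x)\big)+\sum_{k=0}^{K-1}\big(\Psi_{L_k}(x)-\Psi_{L_{k+1}}(x)\big),
\]
with every summand mean zero under each canonical measure $\nu_{\kbf,L_{k+1}}$ on its box. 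I would then split the integrand in \eqref{bg1} (resp. \eqref{bg2}) into a \emph{dynamic} term built from $\sum_x(\tau_x f-\Psi_\ell(x))H_{\lambda,s}(x/N)$ and a \emph{static} term built from $\sum_x\big(\Psi_\ell(x)-\sum_j(\partial_{\rho^{j}}\tilde{f})(\rhobf)\bar{\eta}^{j,(\ell)}(x)\big)H_{\lambda,s}(x/N)$, resp. $\Psi_\ell(x)$ minus the quadratic expression (including its $(2\ell+1)^{-1}\Gamma_{j,k}$ term), and treat the two pieces by completely different bounds.

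\emph{Static term.} Here I would use only the elementary inequality $\E_{\nu_{\rhobf}}[\sup_{t\le T}(\int_0^t V_s\,ds)^2]\le T^2\,\E_{\nu_{\rhobf}}[V^2]$, valid by Cauchy--Schwarz and stationarity. Writing $V=\sum_x\tau_x\psi\,H_{\lambda,s}(x/N)$ with $\psi$ the single-site static error, translation invariance and $|\cov_{\nu_{\rhobf}}(\tau_x\psi,\tau_{x'}\psi)|\le\E_{\nu_{\rhobf}}[\psi^2]$ give $\E_{\nu_{\rhobf}}[V^2]\le\E_{\nu_{\rhobf}}[\psi^2]\,(\sum_x|H(x/N)|)^2$, up to a shift correction negligible in $N$. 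It then remains to show $\E_{\nu_{\rhobf}}[\psi^2]\le C\|f\|_{L^5(\nu_{\rhobf})}^2\,\ell^{-2}$ under the hypotheses of \eqref{bg1} and $\le C\|f\|_{L^5(\nu_{\rhobf})}^2\,\ell^{-3}$ under those of \eqref{bg2}. This is the place where the equivalence of ensembles enters: $\Psi_\ell(x)=\tilde{f}(\etabf^{(\ell)}(x))+O(\|f\|_{L^5}/\ell)$, with a further explicit $-\tfrac1{2(2\ell+1)}\sum_{j,k}(\partial_{\rho^{j}}\partial_{\rho^{k}}\tilde{f})(\rhobf)\Gamma_{j,k}(\rhobf)$ correction at the next order; Taylor expanding $\tilde{f}$ at $\rhobf$ with $\tilde{f}(\rhobf)=0$ (resp. also $\nabla\tilde{f}(\rhobf)=0$) leaves a remainder quadratic (resp. cubic) in $\etabf^{(\ell)}(x)-\rhobf$, and since $\E_{\nu_{\rhobf}}[(\bar{\eta}^{j,(\ell)}(0))^{2m}]\le C_m\ell^{-m}$ the claimed $L^2$ bounds follow. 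Note that in the second case the $(2\ell+1)^{-1}\Gamma_{j,k}$ subtraction in \eqref{bg2} is exactly what cancels the second-order equivalence-of-ensembles correction, which is why one gains the extra power of $\ell$; the power $\|f\|_{L^5}$ is what absorbs the equivalence-of-ensembles errors and the bounds on the derivatives of $\tilde{f}=E_{\nu_\bullet}[f]$.

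\emph{Dynamic term.} I would apply, level by level, the Kipnis--Varadhan inequality $\E_{\nu_{\rhobf}}[\sup_{t\le T}(\int_0^t V_s\,ds)^2]\le C\,T\sup_{s\le T}\vertiii{V_s}_{-1,N}^2$, where $\vertiii{V}_{-1,N}^2=\sup_\phi\{2\langle V,\phi\rangle_{\nu_{\rhobf}}-N^\alpha\langle\phi,(-\gens)\phi\rangle_{\nu_{\rhobf}}\}$ is the $H_{-1}$ norm attached to the symmetric part of the accelerated generator (legitimate because a sector condition holds here, $|p(z)-p(-z)|\le2s(z)$); combining the levels by Minkowski's inequality, it suffices to bound $\vertiii{V^{(k)}_s}_{-1,N}$ for $V^{(k)}_s=\sum_x(\Psi_{L_k}(x)-\Psi_{L_{k+1}}(x))H_{\lambda,s}(x/N)$ (and the base term $\sum_x(\tau_x f-\Psi_{\ell_0}(x))H_{\lambda,s}(x/N)$). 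For such a $V$, since $\Psi_{L_k}(x)-\Psi_{L_{k+1}}(x)$ is mean zero under every $\nu_{\kbf,L_{k+1}}$, one replaces $\phi$ by $\phi-E[\phi\,|\,\mc{F}_{\Lambda_{L_{k+1}}(x)}]$ in each inner product, bounds the conditional variance of $\phi$ by $W(\kbf,L_{k+1})$ times the local Dirichlet form on $\Lambda_{L_{k+1}}(x)$, uses that each jump bond $\{y,z\}$ lies in at most $2L_{k+1}+1$ of the boxes $\Lambda_{L_{k+1}}(x)$ to estimate the sum over $x$ of these local Dirichlet forms by $(2L_{k+1}+1)\langle\phi,(-\gens)\phi\rangle_{\nu_{\rhobf}}$, and invokes Cauchy--Schwarz in $x$ and in the density variable together with Assumption~\ref{assump:sg}. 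This yields
\[
\vertiii{V^{(k)}_s}_{-1,N}^2\le\frac{C}{N^\alpha}\,L_{k+1}^{\,1+\alpha}\,\E_{\nu_{\rhobf}}\big[(\Psi_{L_k}(0)-\Psi_{L_{k+1}}(0))^2\big]\sum_x H\big(\tfrac xN\big)^2 .
\]
The two lost scale powers are recovered from the smallness of the increment: by the projection property $\E[(\Psi_{L_k}-\Psi_{L_{k+1}})^2]={\rm Var}(\Psi_{L_k})-{\rm Var}(\Psi_{L_{k+1}})$, and the equivalence of ensembles gives ${\rm Var}_{\nu_{\rhobf}}(\Psi_L)=C(\rhobf)\|f\|_{L^5}^2/L$ up to lower order under the hypothesis of \eqref{bg1}, improved to $C(\rhobf)\|f\|_{L^5}^2/L^2$ when $\nabla\tilde{f}(\rhobf)=0$. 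Hence $\vertiii{V^{(k)}_s}_{-1,N}^2\le C\|f\|_{L^5}^2 N^{-\alpha}L_k^{\alpha}\sum_x H(x/N)^2$ in the first case and $\le C\|f\|_{L^5}^2 N^{-\alpha}L_k^{\alpha-1}\sum_x H(x/N)^2$ in the second; summing the square roots over the $K\sim\log_2(\ell/\ell_0)$ levels (plus the base term, whose constant depends only on $\ell_0$) gives a geometric sum of order $\ell^{\alpha/2}$ in the first case, and of order $\ell^{(\alpha-1)/2}$, $\log\ell$, or a convergent constant according as $\alpha>1$, $\alpha=1$, $\alpha<1$ in the second, i.e. of order $\sqrt{\beta_\alpha(\ell)}$; squaring and inserting $CT$ produces exactly the first terms on the right of \eqref{bg1} and \eqref{bg2}.

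\emph{Main obstacle.} I expect the dynamic estimate to be the hard part, and within it the bookkeeping that turns the spectral gap of Assumption~\ref{assump:sg} into the displayed $H_{-1}$ bound for translates of a mean-zero-given-density local function in the presence of \emph{long-range} rates: one must track how the local canonical measures $\nu_{\kbf,L}$ (varying with $x$) interact with the global measure $\nu_{\rhobf}$ and its Dirichlet form, and verify that the bond count $\sum_x\mathbf{1}\{y,z\in\Lambda_L(x)\}\le2L+1$ remains harmless when summed against the non-truncated kernel $s(\cdot)$. Two secondary technical points: the test functions $H_{\lambda,s}$ are time dependent (a moving frame when $\alpha\ge1$), so one checks that $\sum_x H_{\lambda,s}(x/N)^2$ and $\sum_x|H_{\lambda,s}(x/N)|$ are controlled uniformly in $s$ by their values at $s=0$ and invokes the version of the Kipnis--Varadhan inequality allowing time-dependent integrands; and the quantitative equivalence-of-ensembles estimates with explicit $\|f\|_{L^5(\nu_{\rhobf})}$ dependence rest on local-limit and large-deviation bounds for the product measures $\bar\nu_{\mubf}$, whose smoothness and exponential tails are provided by Assumptions~\ref{assum:stationary} and \ref{assump:inv}.
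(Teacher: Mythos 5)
Your proposal is correct and follows essentially the same route as the paper: a Kipnis--Varadhan $H_{-1}$ bound combined with the spectral gap (the paper's Lemmas \ref{lem:3.2}--\ref{lem:3.3}), a dyadic box-doubling scheme controlled by the equivalence of ensembles (Lemmas \ref{lem:3.4}--\ref{lem:3.5}), and a static replacement estimated by Cauchy--Schwarz in time, yielding exactly the two terms in \eqref{bg1}--\eqref{bg2}. The only slips are cosmetic: since the spectral gap $W$ is random and controlled only in $L^2(\nu_{\rhobf})$, the increment $\Psi_{L_k}-\Psi_{L_{k+1}}$ must be measured in $L^4$ (not $L^2$ via the variance-difference identity), which the cited equivalence-of-ensembles bounds with the $\|f\|_{L^5(\nu_{\rhobf})}$ constant provide, exactly as in the paper's Lemma \ref{lem:3.4}.
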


We adapt the proof of \cite[Theorem 4.1]{bernardin2021derivation} to the long-jump setting, which  is divided into several lemmas. Along the proof, the constant $C$ may be different from line to line.

For any local, bounded function $f: \Omega \rightarrow \R$, denote $D (f;\nu_{\rhobf})$, respectively $D_{\ell} (f;\nu_{\rhobf})$, the grand Dirichlet form, respectively the grand Dirichlet form restricted to the interval $\Lambda_\ell$,
\begin{align*}
	D (f;\nu_{\rhobf}) &= \<f(-\gen)f\>_{\nu_{\rhobf}} = \frac{1}{2}\sum_{x,y \in \Z} \sum_{i=1}^n  s(y-x) E_{\nu_{\rhobf}} \Big[ g_i (\etabf (x)) \big\{f (\etabf^{i;x,y}) - f (\etabf)\big\}^2 \Big],\\
	D_{\ell} (f;\nu_{\rhobf}) &= \frac{1}{2}\sum_{x,y \in \Lambda_\ell} \sum_{i=1}^n  s(y-x) E_{\nu_{\rhobf}} \Big[ g_i (\etabf (x)) \big\{f (\etabf^{i;x,y}) - f (\etabf)\big\}^2 \Big].
\end{align*}
One could check  that
\begin{equation}\label{3.0}
	\sum_{x \in \Z} D_{\ell} (\tau_x f;\nu_{\rhobf}) \leq (2\ell+1) D (f;\nu_{\rhobf}).
\end{equation}
Indeed, we write the left side in \eqref{3.0} as
\begin{multline*}
\frac{1}{2}\sum_{x \in \Z}  \sum_{y,z \in \Lambda_\ell} \sum_{i=1}^n  s(y-z) E_{\nu_{\rhobf}} \Big[ g_i (\etabf (y)) \big\{(\tau_x f) (\etabf^{i;y,z}) - (\tau_x f )(\etabf)\big\}^2 \Big]	\\
= \frac{1}{2}\sum_{x \in \Z}  \sum_{y,z \in \Lambda_\ell} \sum_{i=1}^n  s(y-z) E_{\nu_{\rhobf}} \Big[ g_i ( (\tau_{-x} \etabf) (y)) \big\{(\tau_x f) ( (\tau_{-x} \etabf)^{i;y,z}) - ( \tau_x f )( \tau_{-x} \etabf)\big\}^2 \Big]\\
= \frac{1}{2}\sum_{x \in \Z}  \sum_{y,z \in \Lambda_\ell} \sum_{i=1}^n  s(y-z) E_{\nu_{\rhobf}} \Big[ g_i ( \etabf (y-x)) \big\{ f (  \etabf^{i;y-x,z-x}) -  f (  \etabf)\big\}^2 \Big].
\end{multline*}
In the first identity, we make the transformation $\etabf \mapsto \tau_{-x} \etabf$, and in the second one we use
\[\tau_x (\tau_{-x} \etabf)^{i;y,z} = \etabf^{i,y-x,z-x}.\]
First summing over $x \in \Z, y \in \Lambda_\ell$, then over $z \in \Lambda_\ell$, we prove \eqref{3.0}. 

\begin{lemma}\label{lem:3.2}
Let $r: \Omega \rightarrow \R$ be a local $L^4 (\nu_{\rhobf})$ function. Suppose the support of $r$ is contained on sites in $\Lambda_{\ell_0}$ for some $\ell_0 > 0$, and that $E_{\nu_{\rhobf}} \big[r(\etabf)|\etabf^{(\ell_0)} (0)\big] = 0$ almost surely.  Then, for any local, bounded function $f: \Omega \rightarrow \R$,
\[\big|E_{\nu_{\rhobf}} \big[r(\etabf)f(\etabf)\big]\big| \leq E_{\nu_{\rhobf}} \Big[W\Big(\sum_{x \in \Lambda_{\ell_0}} \etabf (x),\ell_0\Big)^2\Big]^{1/4} \|r\|_{L^4 (\nu_{\rhobf})} D_{\ell_0} (f;\nu_{\rhobf})^{1/2}.\]
In particular, under the Assumption \ref{assump:sg},  there exists some constant $C=C(\rhobf)$ such that
\[\big|E_{\nu_{\rhobf}} \big[r(\etabf)f (\etabf) \big]\big| \leq C \ell_0^{\alpha/2} \|r\|_{L^4 (\nu_{\rhobf})} D_{\ell_0} (f;\nu_{\rhobf})^{1/2}.\]
\end{lemma}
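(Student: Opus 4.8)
The plan is to use the variational characterization of the spectral gap together with a duality/change-of-measure argument. Since $r$ is supported on $\Lambda_{\ell_0}$ and satisfies $E_{\nu_{\rhobf}}[r(\etabf)\mid \etabf^{(\ell_0)}(0)]=0$ — equivalently, $E_{\nu_{\rhobf}}[r\mid \sum_{x\in\Lambda_{\ell_0}}\etabf(x)]=0$, so that $r$ has mean zero under every canonical measure $\nu_{\kbf,\ell_0}$ — the quantity $E_{\nu_{\rhobf}}[r f]$ only feels the fluctuations of $f$ inside the box. First I would condition on the total number of particles $\kbf = \sum_{x\in\Lambda_{\ell_0}}\etabf(x)$ in the box, writing
\[
E_{\nu_{\rhobf}}[r f] = E_{\nu_{\rhobf}}\Big[ E_{\nu_{\kbf,\ell_0}}[r f]\Big] = E_{\nu_{\rhobf}}\Big[ E_{\nu_{\kbf,\ell_0}}\big[r\, (f - E_{\nu_{\kbf,\ell_0}}[f])\big]\Big],
\]
where the second equality uses $E_{\nu_{\kbf,\ell_0}}[r]=0$. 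Now on each fixed canonical box the operator $-\gens_{\kbf,\ell_0}$ is reversible with respect to $\nu_{\kbf,\ell_0}$, has a spectral gap $W(\kbf,\ell_0)^{-1}$, and annihilates constants; hence $f - E_{\nu_{\kbf,\ell_0}}[f]$ lies in the range of $(-\gens_{\kbf,\ell_0})^{1/2}$ and one can write it as $(-\gens_{\kbf,\ell_0})^{1/2}h$ for a suitable $h$ in the orthogonal complement of the constants, with $\|h\|_{L^2(\nu_{\kbf,\ell_0})}^2 \le W(\kbf,\ell_0)\, \vari_{\nu_{\kbf,\ell_0}}(f)$.

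The core estimate is then obtained by Cauchy–Schwarz in $L^2(\nu_{\kbf,\ell_0})$ after moving half a power of the generator onto $r$: using self-adjointness,
\[
\big|E_{\nu_{\kbf,\ell_0}}[r(f-E_{\nu_{\kbf,\ell_0}}[f])]\big| = \big|E_{\nu_{\kbf,\ell_0}}[(-\gens_{\kbf,\ell_0})^{1/2}r\cdot h]\big| \le \big\langle r, (-\gens_{\kbf,\ell_0}) r\big\rangle_{\nu_{\kbf,\ell_0}}^{1/2}\, \|h\|_{L^2(\nu_{\kbf,\ell_0})},
\]
and combining with the spectral-gap bound on $\|h\|$ gives
\[
\big|E_{\nu_{\kbf,\ell_0}}[r f]\big| \le W(\kbf,\ell_0)^{1/2}\, D_{\kbf,\ell_0}(r)^{1/2}\, \vari_{\nu_{\kbf,\ell_0}}(f)^{1/2}.
\]
Actually it is cleaner to put the gap entirely on the $f$-side and keep the Dirichlet form of $r$: alternatively, one bounds $D_{\kbf,\ell_0}(r) \le W(\kbf,\ell_0)^{-1}\cdot$(something) — but the asymmetry I want is the reverse, so I instead apply the gap to $f$ and estimate $D_{\kbf,\ell_0}(r)$ by $\vari_{\nu_{\kbf,\ell_0}}(r)\cdot$(gap)$^{-1}$ is wrong in direction; the right move is simply: $\vari_{\nu_{\kbf,\ell_0}}(f) \le W(\kbf,\ell_0) D_{\kbf,\ell_0}(f)$ and, for the $r$-factor, $D_{\kbf,\ell_0}(r)\le C\|r\|_{L^2}^2$ trivially since $r$ is bounded on a fixed finite box (the jump rates and kernel restricted to $\Lambda_{\ell_0}$ are bounded). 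This yields $|E_{\nu_{\kbf,\ell_0}}[rf]| \le C\, W(\kbf,\ell_0)\, \|r\|_{L^2(\nu_{\kbf,\ell_0})}\, D_{\kbf,\ell_0}(f)^{1/2}$, and I would then take the outer expectation over $\kbf$, apply Cauchy–Schwarz there in the form $E[W\cdot \|r\|\cdot D^{1/2}] \le E[W^2]^{1/4} E[\|r\|^4]^{1/4} E[D]^{1/2}$ (Hölder with exponents $4,4,2$), use $E_{\nu_{\rhobf}}[\|r\|_{L^2(\nu_{\kbf,\ell_0})}^4]\le \|r\|_{L^4(\nu_{\rhobf})}^4$ by conditional Jensen, and recognize $E_{\nu_{\rhobf}}[D_{\kbf,\ell_0}(f)] = D_{\ell_0}(f;\nu_{\rhobf})$. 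The factor $E_{\nu_{\rhobf}}[W(\sum_{\Lambda_{\ell_0}}\etabf(x),\ell_0)^2]^{1/4}$ then appears exactly as stated, and the second claim follows by inserting Assumption \ref{assump:sg}, which gives $E_{\nu_{\rhobf}}[W^2]^{1/4}\le C\ell_0^{\alpha/2}$.

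The step I expect to be the main obstacle is getting the power-counting and the placement of the spectral gap exactly right so that the constant $C$ genuinely depends only on $\rhobf$ and $\ell_0$ and not on $f$ — in particular, justifying that $D_{\kbf,\ell_0}(r)\le C(\rhobf,\ell_0)\|r\|^2_{L^2(\nu_{\kbf,\ell_0})}$ uniformly in $\kbf$ requires the hypothesis $\sup_{i,j,\kbf}|g_i(\kbf^{j,+})-g_i(\kbf)|<\infty$ (so the rates do not blow up as the box fills), and one must check that the decomposition $f - E_{\nu_{\kbf,\ell_0}}[f]=(-\gens_{\kbf,\ell_0})^{1/2}h$ is legitimate on the (finite-dimensional, hence unproblematic) canonical state space. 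The remaining bookkeeping — the conditional Jensen inequality for $\|r\|^4$ and the identity relating $E_{\nu_{\rhobf}}[D_{\kbf,\ell_0}(f)]$ to $D_{\ell_0}(f;\nu_{\rhobf})$ — is routine.
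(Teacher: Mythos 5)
Your overall skeleton is the standard one behind this lemma (the paper omits the proof and refers to \cite[Proposition 5.2]{bernardin2021derivation}, which runs along exactly these lines): condition on the conserved quantity in $\Lambda_{\ell_0}$, use that $r$ has mean zero under each canonical measure $\nu_{\kbf,\ell_0}$, apply the spectral gap on the canonical box, then H\"older with exponents $(4,4,2)$ over the conditioning, conditional Jensen for the $L^4$ norm of $r$, and the identification $E_{\nu_{\rhobf}}[D_{\kbf,\ell_0}(f)]=D_{\ell_0}(f;\nu_{\rhobf})$. However, your one-box estimate is carried out with the wrong splitting of the generator, and this is not cosmetic. The correct move is to put the \emph{negative} half power on $r$: since $r$ is mean zero under $\nu_{\kbf,\ell_0}$, the spectral gap gives $\<r,(-\gens_{\kbf,\ell_0})^{-1}r\>_{\nu_{\kbf,\ell_0}}\leq W(\kbf,\ell_0)\,\|r\|^2_{L^2(\nu_{\kbf,\ell_0})}$, whence
\[
\big|E_{\nu_{\kbf,\ell_0}}[rf]\big|\leq \<r,(-\gens_{\kbf,\ell_0})^{-1}r\>_{\nu_{\kbf,\ell_0}}^{1/2}\,D_{\kbf,\ell_0}(f)^{1/2}\leq W(\kbf,\ell_0)^{1/2}\,\|r\|_{L^2(\nu_{\kbf,\ell_0})}\,D_{\kbf,\ell_0}(f)^{1/2},
\]
with $W$ appearing to the power $1/2$, so that H\"older $(4,4,2)$ over $\kbf$ produces exactly the stated factor $E_{\nu_{\rhobf}}[W^2]^{1/4}$. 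Your route (putting $(-\gens_{\kbf,\ell_0})^{1/2}$ on $r$, then patching with ${\rm Var}_{\nu_{\kbf,\ell_0}}(f)\leq W(\kbf,\ell_0)\,D_{\kbf,\ell_0}(f)$ and $D_{\kbf,\ell_0}(r)\lesssim \|r\|_{L^2}^2$) ends with $W$ to the first power, and then your displayed H\"older step, $E[W\,\|r\|\,D^{1/2}]\leq E[W^2]^{1/4}E[\|r\|^4]^{1/4}E[D]^{1/2}$, is false as written: with exponents $(4,4,2)$ the first factor is $E[W^4]^{1/4}$, which Assumption \ref{assump:sg} does not control.

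Two further points. First, the crude bound $D_{\kbf,\ell_0}(r)\leq C(\rhobf,\ell_0)\|r\|^2_{L^2(\nu_{\kbf,\ell_0})}$ uniformly in $\kbf$ is not available: the standing hypothesis $\sup_{i,j,\kbf}|g_i(\kbf^{j,+})-g_i(\kbf)|<\infty$ only yields linear growth $g_i(\kbf)\leq C(1+|\kbf|)$, so after the reversibility change of variables $E_{\nu_{\kbf,\ell_0}}[g_i(\etabf(x))r(\etabf^{i;x,y})^2]=E_{\nu_{\kbf,\ell_0}}[g_i(\etabf(y))r(\etabf)^2]$ the constant degrades with the number of particles in the box. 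Second, even if a fourth moment bound on $W$ were assumed, the lost half power of $W$ matters downstream: in Lemmas \ref{lem:3.3}--\ref{lem:3.5} this lemma is applied with $\ell_0$ replaced by the growing scale $\ell$, and a factor of order $\ell^{\alpha}$ in place of $\ell^{\alpha/2}$ would destroy the Boltzmann--Gibbs estimates of Proposition \ref{prop:bg}. With the corrected duality step, the rest of your bookkeeping (H\"older $(4,4,2)$, conditional Jensen for $\|r\|_{L^4}$, $E_{\nu_{\rhobf}}[D_{\kbf,\ell_0}(f)]=D_{\ell_0}(f;\nu_{\rhobf})$, and inserting Assumption \ref{assump:sg}) goes through and yields the lemma as stated.
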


The proof of the above lemma uses the definition of the spectral gap and is exactly the same as \cite[Proposition 5.2]{bernardin2021derivation}. For this reason, we omit it.

\begin{lemma}\label{lem:3.3}
	For any function $H: \R \rightarrow \R$ and any local $L^4 (\nu_{\rhobf})$ function $f: \Omega \rightarrow \R$ such that $\tilde{f} (\rhobf) = 0$, there exists some constant $C = C(\rhobf)$ such that
	\begin{multline}\label{3.1}
		\E_{\nu_{\rhobf}} \Big[ \sup_{0 \leq t \leq T} \Big( \int_0^t \sum_{x \in \Z} \Big[\tau_x f (\etabf_s)  - \E_{\nu_{\rhobf}} \big[\tau_x f (\etabf_s)| \etabf_s^{(\ell )}(x)   \big]\Big] H_{\lambda,s} \big(\tfrac{x}{N}\big) ds \Big)^2 \Big] \\
		 \leq C  \frac{T \ell^{1+\alpha}}{N^{\alpha-1}} \|f\|_{L^4 (\nu_{\rhobf})}^2 \Big(\frac{1}{N}\sum_{x \in \Z} H \big(\tfrac{x}{N}\big)^2\Big).
	\end{multline}	 
\end{lemma}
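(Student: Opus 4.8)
The plan is to run the classical \emph{one-block} estimate. First I would freeze the time variable, recognise the inner sum as a mean-zero local function, and use a Kipnis--Varadhan inequality for additive functionals of the stationary process to reduce \eqref{3.1} to a bound on an $H_{-1}$-type norm; then I would estimate that norm using Lemma~\ref{lem:3.2} (the spectral-gap input) together with the superadditivity bound \eqref{3.0}. Since $\ell$ is taken large in all applications, we may assume $\ell\ge\ell_0$.

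\emph{Reduction step.} For $s\in[0,T]$ put
\[
V_s(\etabf)=\sum_{x\in\Z}\Big[\tau_x f(\etabf)-E_{\nu_{\rhobf}}\big[\tau_x f\,\big|\,\etabf^{(\ell)}(x)\big]\Big]H_{\lambda,s}\big(\tfrac xN\big),
\]
a mean-zero local function (the sum is effectively finite because $H$ decays). The process $\{\etabf_t\}$ has generator $N^\alpha\gen$ and is $\nu_{\rhobf}$-stationary; the Kipnis--Varadhan bound for the supremum of a time-inhomogeneous additive functional of such a (non-reversible) process gives
\[
\E_{\nu_{\rhobf}}\Big[\sup_{0\le t\le T}\Big(\int_0^t V_s(\etabf_s)\,ds\Big)^2\Big]\le C\int_0^T\|V_s\|_{-1}^2\,ds,
\]
where, with $D(\cdot\,;\nu_{\rhobf})$ the Dirichlet form introduced before Lemma~\ref{lem:3.2} (equal to $\langle\,\cdot\,,(-\gens)\,\cdot\,\rangle_{\nu_{\rhobf}}$ since the antisymmetric part of $\gen$ does not contribute to the quadratic form),
\[
\|V_s\|_{-1}^2=\sup_{g}\Big\{2\,E_{\nu_{\rhobf}}[V_s\,g]-N^\alpha D(g;\nu_{\rhobf})\Big\},
\]
the supremum running over local bounded $g$.

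\emph{Bounding the $H_{-1}$ norm.} By translation invariance of $\nu_{\rhobf}$, for each $x$ the substitution $\etabf\mapsto\tau_{-x}\etabf$ gives $|E_{\nu_{\rhobf}}[(\tau_x f-E_{\nu_{\rhobf}}[\tau_x f\mid\etabf^{(\ell)}(x)])g]|=|E_{\nu_{\rhobf}}[\tilde r\,\tau_{-x}g]|$ with $\tilde r:=f-E_{\nu_{\rhobf}}[f\mid\etabf^{(\ell)}(0)]$. Since $\ell\ge\ell_0$, the function $\tilde r$ is supported on $\Lambda_\ell$, satisfies $E_{\nu_{\rhobf}}[\tilde r\mid\etabf^{(\ell)}(0)]=0$ and $\|\tilde r\|_{L^4(\nu_{\rhobf})}\le2\|f\|_{L^4(\nu_{\rhobf})}$, so Lemma~\ref{lem:3.2} (with $\ell$ in place of $\ell_0$, using Assumption~\ref{assump:sg}) yields $|E_{\nu_{\rhobf}}[\tilde r\,\tau_{-x}g]|\le C\ell^{\alpha/2}\|f\|_{L^4(\nu_{\rhobf})}\,D_\ell(\tau_{-x}g;\nu_{\rhobf})^{1/2}$. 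Multiplying by $|H_{\lambda,s}(x/N)|$, applying Young's inequality $ab\le\tfrac A2a^2+\tfrac1{2A}b^2$ termwise, summing over $x$, and using \eqref{3.0} in the form $\sum_x D_\ell(\tau_{-x}g;\nu_{\rhobf})\le(2\ell+1)D(g;\nu_{\rhobf})$ gives
\[
2\,E_{\nu_{\rhobf}}[V_s\,g]\le A\sum_{x\in\Z}H_{\lambda,s}\big(\tfrac xN\big)^2+\frac{C\,\ell^{\alpha}(2\ell+1)\,\|f\|_{L^4(\nu_{\rhobf})}^2}{A}\,D(g;\nu_{\rhobf}).
\]
Choosing $A=C\,\ell^{\alpha}(2\ell+1)\,\|f\|_{L^4(\nu_{\rhobf})}^2/N^\alpha$ cancels the Dirichlet-form term against $N^\alpha D(g;\nu_{\rhobf})$, whence
\[
\|V_s\|_{-1}^2\le\frac{C\,\ell^{1+\alpha}}{N^{\alpha-1}}\,\|f\|_{L^4(\nu_{\rhobf})}^2\Big(\frac1N\sum_{x\in\Z}H_{\lambda,s}\big(\tfrac xN\big)^2\Big).
\]
Finally, $H_{\lambda,s}$ is a fixed translate of $H$, so $\frac1N\sum_x H_{\lambda,s}(x/N)^2\le C\frac1N\sum_x H(x/N)^2$ uniformly in $s\in[0,T]$ (this is where the decay of $H$ enters); inserting this and the last display into the reduction step produces \eqref{3.1}.

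\textbf{Main obstacle.} The computation is a fairly routine transcription of the one-block step, so the delicate points are (i) invoking the correct Kipnis--Varadhan inequality for a \emph{time-dependent} functional of a \emph{non-reversible} stationary process, with the variational formula for $\|\cdot\|_{-1}$ built from the symmetric part $\gens$; and (ii) the translation bookkeeping, so that the factor $\ell^{\alpha/2}$ supplied by the spectral gap (once squared) and the linear-in-$\ell$ loss in \eqref{3.0} combine to yield exactly the advertised power $\ell^{1+\alpha}$, and not a worse one.
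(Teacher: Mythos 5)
Your proposal follows essentially the same route as the paper: Kipnis--Varadhan reduction to an $H_{-1}$ bound, translation invariance to shift $\tau_x$ onto the test function, Lemma~\ref{lem:3.2} with box size $\ell$, Young's inequality, the superadditivity bound \eqref{3.0}, and optimization of the Young parameter, yielding exactly the $\ell^{1+\alpha}N^{1-\alpha}$ rate. The only cosmetic differences are that you use the variational form $\sup_g\{2E_{\nu_{\rhobf}}[V_sg]-N^\alpha D(g;\nu_{\rhobf})\}$ of the $H_{-1}$ norm rather than the quotient definition, and you handle the drift shift in $H_{\lambda,s}$ by a direct comparison of Riemann sums instead of the paper's translation-invariance remark; both are equivalent in substance.
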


\begin{proof}
	By Kipnis--Varadhan inequality \cite{klscaling}, we bound the expectation in \eqref{3.1} by 
	\begin{equation}\label{3.2}
		20 \int_0^T \Big\|\sum_{x \in \Z} \Big[\tau_x f (\etabf)  - E_{\nu_{\rhobf}} \big[\tau_x f (\etabf)| \etabf^{(\ell)} (x)   \big]\Big] H_{\lambda,s} \big(\tfrac{x}{N}\big) \Big\|_{-1,N}^2 ds,
	\end{equation}
where the $H_{-1}$ norm of function $f: \Omega \rightarrow \R$ is defined as
\begin{align*}
	\|f\|_{-1,N} &= \sup \Big\{ \frac{E_{\nu_{\rhobf}}[f\phi]}{\|\phi\|_{1,N}}: \phi \; \text{bounded, local}, \; \|\phi\|_{1,N} > 0\Big\},\\
	\|\phi\|_{1,N}^2 &= N^\alpha 	D (\phi;\nu_{\rhobf}).
\end{align*}
By translation invariance of the measure $\nu_{\rhobf}$, we rewrite \eqref{3.2} as
\[20 T \Big\| \sum_{x \in \Z} \Big[ \tau_x f (\etabf)  - E_{\nu_{\rhobf}} \big[\tau_x f (\etabf)| \etabf^{(\ell)} (x)  \big]\Big] H \big(\tfrac{x}{N}\big) \Big\|_{-1,N}^2.\]
Note that
\begin{multline*}
\Big\| \sum_{x \in \Z} \Big[ \tau_x f (\etabf)  - E_{\nu_{\rhobf}} \big[\tau_x f (\etabf)| \etabf^{(\ell)} (x)  \big]\Big] H \big(\tfrac{x}{N}\big) \Big\|_{-1,N} 
\\= N^{-\alpha/2} \sup_{\phi} \Big\{ D (\phi;\nu_{\rhobf})^{-1/2}  \sum_{x \in \Z} E_{\nu_{\rhobf}} \Big[\Big( \tau_x f (\etabf)  - E_{\nu_{\rhobf}} \big[\tau_x f (\etabf)| \etabf^{(\ell)} (x)   \big]\Big) \phi (\etabf) \Big] H \big(\tfrac{x}{N}\big)  \Big]\Big\} \\
= N^{-\alpha/2} \sup_{\phi} \Big\{ D (\phi;\nu_{\rhobf})^{-1/2}  \sum_{x \in \Z} E_{\nu_{\rhobf}} \Big[\Big(  f (\etabf)  - E_{\nu_{\rhobf}} \big[f (\etabf)| \etabf^{(\ell)}  (0)   \big]\Big) (\tau_{-x} \phi) (\etabf)  \Big] H \big(\tfrac{x}{N}\big)  \Big]\Big\}.
\end{multline*}
By Lemma \ref{lem:3.2}, Young's inequality and \eqref{3.0}, we bound the last line by, for any $A > 0$,
\begin{multline*}
C(\rhobf) \ell^{\alpha/2} N^{-\alpha/2} \sup_{\phi} \Big\{ D (\phi;\nu_{\rhobf})^{-1/2}  \sum_{x \in \Z} \|f\|_{L^4(\nu_{\rhobf})} D_\ell(\tau_{-x} \phi; \nu_{\rhobf})^{1/2} \big| H \big(\tfrac{x}{N}\big) \big|  \Big]\Big\}\\
\leq  C(\rhobf) \ell^{\alpha/2} N^{-\alpha/2} \sup_{\phi} \Big\{ D (\phi;\nu_{\rhobf})^{-1/2} \Big( A \sum_{x \in \Z} \|f\|^2_{L^4(\nu_{\rhobf})}  H \big(\tfrac{x}{N}\big)^2 + A^{-1} \sum_{x \in \Z} D_\ell(\tau_{-x} \phi; \nu_{\rhobf})  \Big)\Big\} \\
\leq  C(\rhobf) \ell^{\alpha/2} N^{-\alpha/2} \sup_{\phi} \Big\{ D (\phi;\nu_{\rhobf})^{-1/2} \Big( A \sum_{x \in \Z} \|f\|^2_{L^4(\nu_{\rhobf})}  H \big(\tfrac{x}{N}\big)^2 + A^{-1} \ell  D ( \phi; \nu_{\rhobf})  \Big)\Big\}.
\end{multline*} 
In the last inequality, we use \eqref{3.0}. By taking 
\[A = \ell^{1/2}  \|f\|^{-1}_{L^4(\nu_{\rhobf})} \Big[\sum_{x \in \Z}  H \big(\tfrac{x}{N}\big)^2\Big]^{-1/2}  D ( \phi; \nu_{\rhobf})^{1/2},\]
we obtain the desired bound.
\end{proof}

The next lemma doubles the length of the box, which is useful when applying the  multi-scale analysis. 

\begin{lemma}\label{lem:3.4}
	For any function $H: \R\rightarrow \R$ and any local $L^5 (\nu_{\rhobf})$ function $f: \Omega \rightarrow \R$ such that $\tilde{f} (\rhobf) = 0$, there exists some constant $C=C(\rhobf)$ such that for large enough $\ell$,  
	\begin{multline*}
		\E_{\nu_{\rhobf}} \Big[ \sup_{0 \leq t \leq T} \Big( \int_0^t \sum_{x \in \Z} \Big[\E_{\nu_{\rhobf}} \big[\tau_x f (\etabf_s)| \etabf^{(\ell)}_s (x)   \big]  - \E_{\nu_{\rhobf}} \big[\tau_x f (\etabf_s)| \etabf^{(2 \ell)}_s (x)   \big]\Big] H_{\lambda,s} \big(\tfrac{x}{N}\big) ds \Big)^2 \Big] \\
		\leq C  \frac{T \ell^{\alpha}}{N^{\alpha-1}} \|f\|_{L^5 (\nu_{\rhobf})}^2 \Big(\frac{1}{N} \sum_{x \in \Z} H \big(\tfrac{x}{N}\big)^2\Big).
	\end{multline*}	 
If in addition $\nabla \tilde{f} (\rhobf) = 0$, then 
	\begin{multline*}
	\E_{\nu_{\rhobf}} \Big[ \sup_{0 \leq t \leq T} \Big( \int_0^t \sum_{x \in \Z} \Big[\E_{\nu_{\rhobf}} \big[\tau_x f (\etabf_s)| \etabf^{(\ell)}_s (x)   \big]  - \E_{\nu_{\rhobf}} \big[\tau_x f (\etabf_s)| \etabf^{(2 \ell)}_s (x)   \big]\Big] H_{\lambda,s} \big(\tfrac{x}{N}\big) ds \Big)^2 \Big] \\
	\leq C \frac{T \ell^{\alpha-1}}{N^{\alpha-1}} \|f\|_{L^5 (\nu_{\rhobf})}^2 \Big(\frac{1}{N} \sum_{x \in \Z} H \big(\tfrac{x}{N}\big)^2\Big).
\end{multline*}	
\end{lemma}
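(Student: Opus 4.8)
The plan is to run the argument of Lemma~\ref{lem:3.3} almost verbatim, with the averaging box doubled, the one additional ingredient being an equivalence-of-ensembles estimate that produces the extra decay in $\ell$. First I would set
\[
r(\etabf) \ :=\ \E_{\nu_{\rhobf}}[\,f(\etabf)\mid\etabf^{(\ell)}(0)\,] \ -\ \E_{\nu_{\rhobf}}[\,f(\etabf)\mid\etabf^{(2\ell)}(0)\,],
\]
which is a local $L^4(\nu_{\rhobf})$ function supported on $\Lambda_{2\ell}$, and observe (using translation invariance of $\nu_{\rhobf}$, exactly as in the manipulations around \eqref{3.0}) that the bracketed integrand in the statement equals $\tau_x r(\etabf_s)$. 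By the Kipnis--Varadhan inequality \cite{klscaling} and translation invariance, as in the reduction from \eqref{3.2} onward, the left-hand side is then bounded by $20\,T\,\big\|\sum_{x\in\Z}\tau_x r\,H(\tfrac xN)\big\|_{-1,N}^2$.

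The key new point is that $\E_{\nu_{\rhobf}}[\,r\mid\etabf^{(2\ell)}(0)\,]=0$ almost surely. Indeed, since $\nu_{\rhobf}$ is a product measure and $f$ depends on finitely many coordinates, adjoining the total over $\Lambda_{2\ell}$ to the conditioning is the same as adjoining the total over $\Lambda_{2\ell}\setminus\Lambda_\ell$, which is independent of $\etabf|_{\Lambda_\ell}$ and hence of $f$; thus $\E_{\nu_{\rhobf}}[\,f\mid\etabf^{(\ell)}(0),\etabf^{(2\ell)}(0)\,]=\E_{\nu_{\rhobf}}[\,f\mid\etabf^{(\ell)}(0)\,]$, and the tower property gives $\E_{\nu_{\rhobf}}[\,\E_{\nu_{\rhobf}}[f\mid\etabf^{(\ell)}(0)]\mid\etabf^{(2\ell)}(0)\,]=\E_{\nu_{\rhobf}}[\,f\mid\etabf^{(2\ell)}(0)\,]$. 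Consequently Lemma~\ref{lem:3.2} applies with $\ell_0$ replaced by $2\ell$, and reproducing the chain of estimates in the proof of Lemma~\ref{lem:3.3} --- the spectral-gap bound of Assumption~\ref{assump:sg}, Young's inequality with a free parameter, the Dirichlet-form bound \eqref{3.0} for the box $\Lambda_{2\ell}$, and optimization in the parameter --- would yield
\[
\Big\|\sum_{x\in\Z}\tau_x r\,H\big(\tfrac xN\big)\Big\|_{-1,N}^2 \ \leq\ C\,\frac{\ell^{\,1+\alpha}}{N^{\alpha}}\,\|r\|_{L^4(\nu_{\rhobf})}^2\sum_{x\in\Z} H\big(\tfrac xN\big)^2 .
\]

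It then remains to establish the equivalence-of-ensembles bounds
\[
\|r\|_{L^4(\nu_{\rhobf})}^2 \leq \frac{C}{\ell}\,\|f\|_{L^5(\nu_{\rhobf})}^2 \ \ \text{if }\tilde f(\rhobf)=0, \qquad \|r\|_{L^4(\nu_{\rhobf})}^2 \leq \frac{C}{\ell^2}\,\|f\|_{L^5(\nu_{\rhobf})}^2 \ \ \text{if moreover }\nabla\tilde f(\rhobf)=0,
\]
which is where the real work lies. By the triangle inequality it suffices to bound $\|\E_{\nu_{\rhobf}}[\,f\mid\etabf^{(L)}(0)\,]\|_{L^4(\nu_{\rhobf})}$ for $L\in\{\ell,2\ell\}$. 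Setting $a:=\etabf^{(L)}(0)-\rhobf$ and Taylor-expanding the canonical expectation together with a comparison of the canonical and grand-canonical ensembles (a local central limit theorem estimate) gives $\E_{\nu_{\rhobf}}[\,f\mid\etabf^{(L)}(0)\,]=\nabla\tilde f(\rhobf)\cdot a + O(|a|^2) + O(L^{-1})$; since $\|a\|_{L^q(\nu_{\rhobf})}\leq C L^{-1/2}$ for every fixed $q$ by the Marcinkiewicz--Zygmund inequality, this is $O(L^{-1/2})$ in $L^4$ when $\tilde f(\rhobf)=0$, and $O(L^{-1})$ in $L^4$ when in addition $\nabla\tilde f(\rhobf)=0$, the remainder terms being absorbed using the $L^5$ norm of $f$ --- this is precisely why one power above $L^4$ is needed on the right-hand side. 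These are properties of the stationary product measure alone and do not involve the dynamics, so they can be quoted from the corresponding estimates in \cite{bernardin2021derivation}. Multiplying the previous display by $20T$, inserting the two cases, and using $N^{-\alpha}\sum_x H(\tfrac xN)^2 = N^{1-\alpha}\cdot\tfrac1N\sum_x H(\tfrac xN)^2$ together with $\ell^{1+\alpha}\cdot\ell^{-1}=\ell^{\alpha}$ and $\ell^{1+\alpha}\cdot\ell^{-2}=\ell^{\alpha-1}$, gives the two claimed bounds.

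The main obstacle is thus the equivalence-of-ensembles step: one must control $\E_{\nu_{\rhobf}}[\,f\mid\etabf^{(L)}(0)\,]$ in $L^4(\nu_{\rhobf})$ with the sharp power of $L$, uniformly in $L$. Everything else is a routine transcription of the proof of Lemma~\ref{lem:3.3} with the box size doubled.
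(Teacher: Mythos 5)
Your proposal is correct and follows essentially the same route as the paper: the key observation that $\E_{\nu_{\rhobf}}\big[\E_{\nu_{\rhobf}}[f\mid\etabf^{(\ell)}(0)]\,\big|\,\etabf^{(2\ell)}(0)\big]=\E_{\nu_{\rhobf}}[f\mid\etabf^{(2\ell)}(0)]$ (so the difference $r$ has zero conditional mean at scale $2\ell$), then the Kipnis--Varadhan/spectral-gap argument of Lemma \ref{lem:3.3} applied to $r$ on the box $\Lambda_{2\ell}$, and finally the equivalence-of-ensembles estimates quoted from \cite{bernardin2021derivation} to gain the extra factor $\ell^{-1}$ (resp.\ $\ell^{-2}$) in $\|r\|_{L^4(\nu_{\rhobf})}^2$. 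The only cosmetic difference is that the paper bounds $\|r\|_{L^4}$ by comparing each conditional expectation with its explicit linear (resp.\ quadratic) polynomial approximation in $\bar{\etabf}^{(\ell)}$, while you bound the two conditional expectations directly by triangle inequality, which amounts to the same estimate.
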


\begin{proof}
First observe that
\[\E_{\nu_{\rhobf}} \Big[ \E_{\nu_{\rhobf}} \big[\tau_x f (\etabf_s)| \etabf^{(\ell)}_s (x)   \big] \Big| \etabf^{(2\ell)}_s (x) \Big] = \E_{\nu_{\rhobf}} \big[\tau_x f (\etabf_s)| \etabf^{(2 \ell)}_s (x)   \big]\]	
since the support of $f$ is contained on sites in $\Lambda_\ell$ for $\ell$ large enough. Then, following the proof of  Lemma \ref{lem:3.3}, the expectation in the lemma are bounded by
\[C  \frac{T \ell^{\alpha+1}}{N^{\alpha-1}} \big\|E_{\nu_{\rhobf}} \big[ f (\etabf)| \etabf^{(\ell)} (0)   \big]  - E_{\nu_{\rhobf}} \big[ f (\etabf )| \etabf^{(2 \ell)} (0)\big] \big\|_{L^4 (\nu_{\rhobf})}^2 \Big(\frac{1}{N} \sum_{x \in \Z} H \big(\tfrac{x}{N}\big)^2\Big).\]
The equivalence of ensembles \cite[Theorem 5.1]{bernardin2021derivation} states that if $\tilde{f} (\rhobf) = 0$, then 
\begin{equation}\label{equiofensem1}
\big\| E_{\nu_{\rhobf}} \big[ f (\etabf)| \etabf^{(\ell)} (0)   \big]  - \sum_{j=1}^n (\partial_{\rho^{j}} \tilde{f}) (\rhobf) \bar{\eta}^{j,(\ell)} (0)   \big\|_{L^4 (\nu_{\rhobf})}^2 \leq C(\rhobf) \ell^{-2} \|f\|_{L^5 (\nu_{\rhobf})}^2,
\end{equation}
and that if $\tilde{f} (\rhobf) = \nabla \tilde{f} (\rhobf) = 0$, then 
\begin{equation}\label{equiofensem2}
\big\| E_{\nu_{\rhobf}} \big[ f (\etabf)| \etabf^{(\ell)} (0)   \big]  - \sum_{j,k=1}^n (\partial_{\rho^{j}} \partial_{\rho^{k}}\tilde{f}) (\rhobf) \big\{ \bar{\eta}^{j,(\ell)} (0)  \bar{\eta}^{k,(\ell)} (0) - \tfrac{\Gamma_{j,k} (\rhobf)}{2\ell+1}\big\} \big\|_{L^4 (\nu_{\rhobf})}^2 \leq C(\rhobf) \ell^{-3} \|f\|_{L^5 (\nu_{\rhobf})}^2.
\end{equation}
Therefore, by  Cauchy-Schwarz inequality, if $\tilde{f} (\rhobf) = 0$, then 
\begin{multline*}
\Big\| E_{\nu_{\rhobf}} \big[ f (\etabf)| \etabf^{(\ell)} (0)   \big]  - E_{\nu_{\rhobf}} \big[ f (\etabf )| \etabf^{(2 \ell)} (0)   \big] \Big\|_{L^4 (\nu_{\rhobf})}^2 
\leq 3  \Big\| E_{\nu_{\rhobf}} \big[ f (\etabf)| \etabf^{(\ell)} (0)   \big]  - \sum_{j=1}^n (\partial_{\rho^{j}} \tilde{f}) (\rhobf) \bar{\eta}^{j,(\ell)} (0)  \Big\|_{L^4 (\nu_{\rhobf})}^2 \\
+ 3 \Big\| \sum_{j=1}^n (\partial_{\rho^{j}} \tilde{f}) (\rhobf) \bar{\eta}^{j,(\ell)} (0)   - \sum_{j=1}^n (\partial_{\rho^{j}} \tilde{f}) (\rhobf) \bar{\eta}^{j,(2\ell)} (0)    \big] \Big\|_{L^4 (\nu_{\rhobf})}^2\\
+ 3 \Big\| \sum_{j=1}^n (\partial_{\rho^{j}} \tilde{f}) (\rhobf) \bar{\eta}^{j,(2\ell)} (0)   - E_{\nu_{\rhobf}} \big[ f (\etabf )| \etabf^{(2 \ell)} (0)   \big] \Big\|_{L^4 (\nu_{\rhobf})}^2 \\
\leq C(\rhobf) \big\{  \ell^{-2} \|f\|_{L^5 (\nu_{\rhobf})}^2 + \ell^{-1} \|f\|_{L^2 (\nu_{\rhobf})}^2 \big\} \leq C(\rhobf) \ell^{-1} \|f\|_{L^5 (\nu_{\rhobf})}^2.
\end{multline*}
In the penultimate inequality, we use \eqref{equiofensem1} and the fact that
\[\|\bar{\eta}^{j,(\ell)} (0) \|_{L^4 (\nu_{\rhobf})}^4 \leq C(\rhobf) \ell^{-2}, \quad |(\partial_{\rho^{j}} \tilde{f}) (\rhobf) | \leq C(\rhobf) \|f\|_{L^2 (\nu_{\rhobf})}. \]
If $\tilde{f} (\rhobf) = \nabla \tilde{f} (\rhobf) = 0$, by using \eqref{equiofensem2} and
\[ \big\| \bar{\eta}^{j,(\ell)} (0)  \bar{\eta}^{k,(\ell)} (0) - \tfrac{\Gamma_{j,k} (\rhobf)}{2\ell+1} \big\|_{L^4 (\nu_{\rhobf})}^4 \leq C(\rhobf) \ell^{-4}, \quad |(\partial_{\rho^{j}} \partial_{\rho^{k}}\tilde{f}) (\rhobf) | \leq C(\rhobf) \|f\|_{L^2 (\nu_{\rhobf})}, \] 
we bound
\[\Big\| E_{\nu_{\rhobf}} \big[ f (\etabf)| \etabf^{(\ell)} (0)   \big]  - E_{\nu_{\rhobf}} \big[ f (\etabf )| \etabf^{(2 \ell)} (0)   \big] \Big\|_{L^4 (\nu_{\rhobf})}^2 \leq C(\rhobf) \ell^{-2} \|f\|_{L^5 (\nu_{\rhobf})}^2,\]
which is sufficient to conclude the proof. 
\end{proof}

\begin{lemma}\label{lem:3.5}
	Let $H: \R \rightarrow \R$ be some function.  For any local $L^5 (\nu_{\rhobf})$ function $f: \Omega \rightarrow \R$ supported on sites in $\Lambda_{\ell_0}$ such that $\tilde{f} (\rhobf) = 0$, there exists some constant $C = C(\rhobf, \ell_0,\alpha)$ such that for  $\ell \geq \ell_0$,  
	\begin{multline*}
		\E_{\nu_{\rhobf}} \Big[ \sup_{0 \leq t \leq T} \Big( \int_0^t \sum_{x \in \Z} \Big[\E_{\nu_{\rhobf}} \big[\tau_x f (\etabf_s)| \etabf^{(\ell_0)}_s (x)   \big]  - \E_{\nu_{\rhobf}} \big[\tau_x f (\etabf_s)| \etabf^{(\ell)}_s (x)   \big]\Big] H_{\lambda,s} \big(\tfrac{x}{N}\big) ds \Big)^2 \Big] \\
		\leq C \frac{T \ell^{\alpha}}{N^{\alpha-1}} \|f\|_{L^5 (\nu_{\rhobf})}^2 \Big(\frac{1}{N}\sum_{x \in \Z} H \big(\tfrac{x}{N}\big)^2\Big).
	\end{multline*}	 
	If in addition $\nabla \tilde{f} (\rhobf) = 0$, then 
	\begin{multline*}
		\E_{\nu_{\rhobf}} \Big[ \sup_{0 \leq t \leq T} \Big( \int_0^t \sum_{x \in \Z} \Big[\E_{\nu_{\rhobf}} \big[\tau_x f (\etabf_s)| \etabf^{(\ell_0)}_s (x)   \big]  - \E_{\nu_{\rhobf}} \big[\tau_x f (\etabf_s)| \etabf^{(\ell)}_s (x)   \big]\Big] H_{\lambda,s} \big(\tfrac{x}{N}\big) ds \Big)^2 \Big] \\
		\leq C \frac{T \beta_\alpha (\ell)}{N^{\alpha-1}} \|f\|_{L^5 (\nu_{\rhobf})}^2 \Big(\frac{1}{N}\sum_{x \in \Z} H \big(\tfrac{x}{N}\big)^2\Big).
	\end{multline*}	
Recall the definition of $\beta_\alpha (\cdot)$ from Proposition \ref{prop:bg}.
\end{lemma}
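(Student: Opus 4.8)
The plan is to run a dyadic multi-scale argument reducing everything to Lemma~\ref{lem:3.4}. Fix $\ell \geq \ell_0$ and let $M$ be the integer with $2^{M-1}\ell_0 \leq \ell < 2^M\ell_0$; set $\ell_k = 2^k\ell_0$ for $0 \leq k \leq M-1$ and $\ell_M = \ell$, so that consecutive scales differ by a factor lying in $[1,2]$. Since $\tau_x f$ is supported on sites in $x+\Lambda_{\ell_0}$ and all scales involved are at least $\ell_0$, the tower property gives $\E_{\nu_{\rhobf}}[\,\E_{\nu_{\rhobf}}[\tau_x f(\etabf_s)\,|\,\etabf^{(\ell_k)}_s(x)]\,|\,\etabf^{(\ell_{k+1})}_s(x)] = \E_{\nu_{\rhobf}}[\tau_x f(\etabf_s)\,|\,\etabf^{(\ell_{k+1})}_s(x)]$, whence the telescoping identity
\[\E_{\nu_{\rhobf}}[\tau_x f(\etabf_s)|\etabf^{(\ell_0)}_s(x)] - \E_{\nu_{\rhobf}}[\tau_x f(\etabf_s)|\etabf^{(\ell)}_s(x)] = \sum_{k=0}^{M-1}\Big(\E_{\nu_{\rhobf}}[\tau_x f(\etabf_s)|\etabf^{(\ell_k)}_s(x)] - \E_{\nu_{\rhobf}}[\tau_x f(\etabf_s)|\etabf^{(\ell_{k+1})}_s(x)]\Big).\]

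Substituting this into the time integral, bounding the supremum over $t$ of a finite sum by the sum of the suprema, and then applying Minkowski's inequality in $L^2(\P_{\nu_{\rhobf}})$, I would bound the square root of the left-hand side in the lemma by
\[\sum_{k=0}^{M-1}\Big(\E_{\nu_{\rhobf}}\Big[\sup_{0\le t\le T}\Big(\int_0^t\sum_{x\in\Z}\big(\E_{\nu_{\rhobf}}[\tau_x f(\etabf_s)|\etabf^{(\ell_k)}_s(x)]-\E_{\nu_{\rhobf}}[\tau_x f(\etabf_s)|\etabf^{(\ell_{k+1})}_s(x)]\big)H_{\lambda,s}\big(\tfrac xN\big)\,ds\Big)^2\Big]\Big)^{1/2}.\]
Each term on the right is exactly of the type estimated in Lemma~\ref{lem:3.4}: the proof of that lemma uses only that the two scales differ by a bounded factor, so it applies verbatim to the pair $(\ell_k,\ell_{k+1})$, including the last block where the ratio $\ell/(2^{M-1}\ell_0)$ may be strictly less than $2$. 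Thus, when $\tilde f(\rhobf)=0$ the $k$-th term is at most $C\,(T\ell_k^{\alpha}N^{1-\alpha})^{1/2}\|f\|_{L^5(\nu_{\rhobf})}\big(\tfrac1N\sum_x H(\tfrac xN)^2\big)^{1/2}$, and when in addition $\nabla\tilde f(\rhobf)=0$ it is at most $C\,(T\ell_k^{\alpha-1}N^{1-\alpha})^{1/2}\|f\|_{L^5(\nu_{\rhobf})}\big(\tfrac1N\sum_x H(\tfrac xN)^2\big)^{1/2}$, with $\ell_k = 2^k\ell_0$.

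It then remains to sum the geometric series in $k$. In the first case $\sum_{k=0}^{M-1}\ell_k^{\alpha/2}=\ell_0^{\alpha/2}\sum_{k=0}^{M-1}2^{k\alpha/2}\le C(\alpha)(2^{M-1}\ell_0)^{\alpha/2}\le C\,\ell^{\alpha/2}$ since $\alpha>0$, which after squaring yields the factor $\ell^{\alpha}$ and the first bound. In the second case the relevant sum is $\sum_{k=0}^{M-1}\ell_k^{(\alpha-1)/2}$: for $\alpha>1$ the ratio $2^{(\alpha-1)/2}$ exceeds $1$ and the sum is controlled by its last term, of order $\ell^{(\alpha-1)/2}$, giving $\ell^{\alpha-1}$ after squaring; for $\alpha=1$ every summand equals $1$, so the sum is $M\le C\log\ell$, giving $(\log\ell)^2$; for $0<\alpha<1$ the ratio is $<1$ and the sum is bounded by a constant depending only on $\alpha$ and $\ell_0$, giving $1$. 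In all three cases this is precisely $\beta_\alpha(\ell)$, and the constant absorbs the dependence on $\rhobf$, $\ell_0$, $\alpha$. I expect the only delicate points to be this last summation — especially the appearance of the logarithm at $\alpha=1$ from adding $M\sim\log(\ell/\ell_0)$ equal contributions — together with the remark that Lemma~\ref{lem:3.4} persists for a pair of scales with ratio in $[1,2]$ rather than exactly $2$; both are routine.
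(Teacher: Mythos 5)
Your proposal is correct and follows essentially the same route as the paper: a dyadic decomposition of the scales between $\ell_0$ and $\ell$, the triangle (Minkowski) inequality in $L^2$, the two-scale estimate of Lemma \ref{lem:3.4} applied to each pair of consecutive scales (including the final block with ratio in $[1,2]$, which the paper handles implicitly in the same way), and summation of the resulting geometric series, with the $(\log \ell)^2$ at $\alpha=1$ arising exactly as you describe.
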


\begin{proof}
For any $\ell \geq \ell_0$, denote $\ell = 2^{m+1} \ell_0 + r$ for some positive  integer $m$ and $0 \leq r \leq 2^{m+1} \ell_0 -1$. Then,  we split
\begin{multline*}
	\E_{\nu_{\rhobf}} \big[\tau_x f (\etabf_s)| \etabf^{(\ell)}_s (x)   \big]  - \E_{\nu_{\rhobf}} \big[\tau_x f (\etabf_s)| \etabf^{(\ell_0)}_s (x)   \big] \\
	= \sum_{j=0}^m \Big\{ \E_{\nu_{\rhobf}} \big[\tau_x f (\etabf_s)|  \etabf^{(2^{j+1}\ell_0)}_s (x)   \big]  - \E_{\nu_{\rhobf}} \big[\tau_x f (\etabf_s)|  \etabf^{(2^{j}\ell_0)}_s (x)   \big]\Big\}\\
	+  \E_{\nu_{\rhobf}} \big[\tau_x f (\etabf_s)| \etabf_s^{(\ell)} (x)   \big]  - \E_{\nu_{\rhobf}} \big[\tau_x f (\etabf_s)| \etabf_s^{ (2^{m+1} \ell_0)} (x)   \big].
\end{multline*}
By Cauchy-Schwarz inequality, we bound the expectation in the lemma by 
\begin{multline*}
\Big\{  \sum_{j=0}^m 	\E_{\nu_{\rhobf}} \Big[ \sup_{0 \leq t \leq T} \Big( \int_0^t \sum_{x \in \Z} \Big[ \E_{\nu_{\rhobf}} \big[\tau_x f (\etabf_s)|  \etabf^{(2^{j+1}\ell_0)}_s (x)   \big]  - \E_{\nu_{\rhobf}} \big[\tau_x f (\etabf_s)|  \etabf^{(2^{j}\ell_0)}_s (x)   \big]\Big] H_{\lambda,s} \big(\tfrac{x}{N}\big) ds \Big)^2 \Big]^{1/2} \\
+	\E_{\nu_{\rhobf}} \Big[ \sup_{0 \leq t \leq T} \Big( \int_0^t \sum_{x \in \Z} \Big[\E_{\nu_{\rhobf}} \big[\tau_x f (\etabf_s)| \etabf_s^{(\ell)} (x)   \big]  - \E_{\nu_{\rhobf}} \big[\tau_x f (\etabf_s)| \etabf_s^{ (2^{m+1} \ell_0)}  (x)   \big]\Big] H_{\lambda,s} \big(\tfrac{x}{N}\big) ds \Big)^2 \Big]^{1/2}     \Big\}^2.
\end{multline*}
By Lemma \ref{lem:3.4}, if $\tilde{f} (\rhobf) = 0$,  the last expression is bounded by
\[\frac{C(\rhobf)T\ell_0^\alpha}{N^{\alpha-1}}   \|f\|_{L^5 (\nu_{\rhobf})}^2 \Big( \frac{1}{N} \sum_{x \in \Z} H \big(\tfrac{x}{N}\big)^2\Big) \Big\{ 2^{(m+1)\alpha/2} + \sum_{j=0}^m 2^{j \alpha/2} \Big\}^2 \leq \frac{C(\rhobf,\alpha)T\ell^\alpha}{N^{\alpha-1}}   \|f\|_{L^5 (\nu_{\rhobf})}^2 \Big( \frac{1}{N} \sum_{x \in \Z} H \big(\tfrac{x}{N}\big)^2\Big) .\]
If $\tilde{f} (\rhobf) = \nabla \tilde{f} (\rhobf) =0$, the bound is 
\[\frac{C(\rhobf)T\ell_0^{\alpha-1}}{N^{\alpha-1}}   \|f\|_{L^5 (\nu_{\rhobf})}^2 \Big( \frac{1}{N} \sum_{x \in \Z} H \big(\tfrac{x}{N}\big)^2\Big)  \Big\{ 2^{(m+1)(\alpha-1)/2} + \sum_{j=0}^m 2^{j (\alpha-1)/2} \Big\}^2.\]
Since
\[\ell_0^{\alpha-1} \Big\{ 2^{(m+1)(\alpha-1)/2} + \sum_{j=0}^m 2^{j (\alpha-1)/2} \Big\}^2 \leq  C(\ell_0,\alpha) \beta_\alpha (\ell),\]
the proof is completed.
\end{proof}

Now, we are ready to prove Proposition \ref{prop:bg}.

\begin{proof}[Proof of Proposition \ref{prop:bg}]
By Cauchy-Schwarz inequality, if $\tilde{f} (\rhobf) = 0$, the expectation in \eqref{bg1} is bounded by
\begin{multline}\label{3.3}
	 3\E_{\nu_{\rhobf}} \Big[\sup_{0 \leq t \leq T} \Big( \int_0^t \sum_{x \in \Z} \Big[\tau_x f (\etabf_s)  - \E_{\nu_{\rhobf}} \big[\tau_x f (\etabf_s)| \etabf_s^{(\ell_0)}  (x)   \big]\Big] H_{\lambda,s} \big(\tfrac{x}{N}\big) ds \Big)^2 \Big] \\
	 +  3\E_{\nu_{\rhobf}} \Big[\sup_{0 \leq t \leq T} \Big( \int_0^t \sum_{x \in \Z} \Big[\E_{\nu_{\rhobf}} \big[\tau_x f (\etabf_s)| \etabf_s^{(\ell_0)}  (x)   \big]  - \E_{\nu_{\rhobf}} \big[\tau_x f (\etabf_s)| \etabf_s^{(\ell)}  (x)   \big]\Big] H_{\lambda,s} \big(\tfrac{x}{N}\big) ds \Big)^2 \Big] \\
	  +3\E_{\nu_{\rhobf}} \Big[\sup_{0 \leq t \leq T} \Big( \int_0^t \sum_{x \in \Z} \Big[\E_{\nu_{\rhobf}} \big[\tau_x f (\etabf_s)| \etabf_s^{(\ell)}  (x)   \big]  - \sum_{j=1}^n (\partial_{\rho^{j}} \tilde{f}) (\rhobf) \bar{\eta}^{j,(\ell)}_s (x) \Big] H_{\lambda,s} \big(\tfrac{x}{N}\big) ds \Big)^2 \Big].
\end{multline}
By Lemmas \ref{lem:3.3}, \ref{lem:3.5}, the first two lines in \eqref{3.3} are bounded by
\[  C(\rhobf,\ell_0,\alpha)  T \|f\|_{L^5 (\nu_{\rhobf})}^2 \frac{\ell_0^{1+\alpha} +\ell^\alpha}{N^{\alpha-1}} \Big(\frac{1}{N}\sum_{x \in \Z} H \big(\tfrac{x}{N}\big)^2\Big).\]
By Cauchy-Schwarz inequality, stationary and translation invariance of the measure $\nu_{\rhobf}$  and the equivalence of ensembles \eqref{equiofensem1}, the last line in \eqref{3.3} is bounded by
\begin{multline*}
	3 T^2 N^2 \big\| E_{\nu_{\rhobf}} \big[ f (\etabf)| \etabf^{(\ell)}  (0)   \big]  - \sum_{j=1}^n (\partial_{\rho^{j}} \tilde{f}) (\rhobf) \bar{\eta}^{j,(\ell)} (0)  \big] \big\|_{L^2 (\nu_{\rhobf})}^2 \Big(\frac{1}{N}\sum_{x \in \Z} \big| H_{\lambda,s} \big(\tfrac{x}{N}\big) \big|  \Big)^2 \\
	\leq C(\rhobf) T^2 \|f\|_{L^5 (\nu_{\rhobf})}^2 \frac{N^2}{\ell^2} \Big(\frac{1}{N}\sum_{x \in \Z} \big| H \big(\tfrac{x}{N}\big) \big|  \Big)^2.
\end{multline*}

The proof of \eqref{bg2} is similar. Instead of using \eqref{equiofensem1}, we use \eqref{equiofensem2} and bound
\begin{multline*}
	\E_{\nu_{\rhobf}} \Big[\sup_{0 \leq t \leq T} \Big( \int_0^t \sum_{x \in \Z} \Big[\E_{\nu_{\rhobf}} \big[\tau_x f (\etabf_s)| \etabf_s^{(\ell)}  (x)   \big]  \\
	- \frac{1}{2}\sum_{j,k=1}^n (\partial_{\rho^{j}} \partial_{\rho^{k}} \tilde{f}) (\rhobf) \big\{ \bar{\eta}^{j,(\ell)}_s (x) \bar{\eta}^{k,(\ell)}_s (x) - \tfrac{\Gamma_{j,k} (\rhobf)}{2\ell+1} \big\} \Big] H_{\lambda,s} \big(\tfrac{x}{N}\big) ds \Big)^2 \Big] \\
\leq 	C(\rhobf) T^2 \|f\|_{L^5 (\nu_{\rhobf})}^2 \frac{N^2}{\ell^3} \Big(\frac{1}{N}\sum_{x \in \Z} \big| H \big(\tfrac{x}{N}\big) \big|  \Big)^2.
\end{multline*}
This concludes the proof of the proposition.
\end{proof}

\section{Tightness}\label{sec:tightness}

In this section, we prove the tightness of the sequence $\{\mc{Y}^N_t, 0 \leq t \leq T\}_{N \geq 1}$. By Mitoma's criterion \cite{mitoma1983tightness},  we only need to prove that for any $1 \leq i \leq n$, for any $H \in \mathcal{S} (\R)$, the real valued sequence $\{\mathcal{Y}^{i,N}_t (H),\,0 \leq t \leq T\}_{N \geq 1}$ is tight.

For $H \in \mc{S} (\R)$, define
\begin{align*}
\Lcal_N H \big(\tfrac{x}{N}\big) &= N^\alpha \sum_{y \in \Z} p (y) \Big[ H \big(\tfrac{x+y}{N}\big) - H \big(\tfrac{x}{N}\big)\Big] - \frac{m^\alpha_N}{N} H^\prime \big(\tfrac{x}{N}\big),\\
\mc{U}_N H \big(\tfrac{x}{N}\big) &= N^\alpha \sum_{y \in \Z} p (y) \Big[ H \big(\tfrac{x+y}{N}\big) - H \big(\tfrac{x}{N}\big)\Big].
\end{align*}
Note that $\Lcal_N H = \mc{U}_N H$ if $0 < \alpha < 1$, and that $\Lcal_N$ could be viewed as a discrete version of the operator $\Lcal$ defined in \eqref{l}. In fact, by \cite[Proposition 2.2]{gonccalves2018density},
\begin{align}
	\lim_{N \rightarrow \infty} \sup_{x \in \Z} \Big|\Lcal_N H \big(\tfrac{x}{N}\big) - \Lcal H \big(\tfrac{x}{N}\big) \Big| = 0,\label{eqn0}\\
	\lim_{N \rightarrow \infty} \frac{1}{N}\sum_{x \in \Z} \Big|\Lcal_N H \big(\tfrac{x}{N}\big) - \Lcal H \big(\tfrac{x}{N}\big) \Big| = 0.\label{eqn1}
\end{align}

\subsection{Associated martingales}\label{subsec:martingale} We start from calculating the martingales associated to the fluctuation fields. By Dynkin's formula, for $H \in \mathcal{S} (\R)$ and for $1 \leq i \leq n$,
\begin{equation}\label{martingale1}
	\mc{M}_t^{i,N} (H) = \mathcal{Y}^{i,N}_t (H)  - \mathcal{Y}^{i,N}_0 (H)  - \int_0^t \big(N^\alpha \gen +\partial_s\big) \mathcal{Y}^{i,N}_s (H) ds 
\end{equation} 
is a martingale with quadratic variation
\begin{equation}\label{qv1}
	\<\mc{M}_\cdot^{i,N} (H)\>_t = \int_0^t \Big\{ N^\alpha \gen \mathcal{Y}^{i,N}_s (H)^2 - 2 \mathcal{Y}^{i,N}_s (H) N^\alpha \gen \mathcal{Y}^{i,N}_s (H)\Big\} ds.
\end{equation} 
Denote
\[V^i (\etabf) =  g_i (\etabf (0)) - \tilde{g}_i (\rhobf) 
- \sum_{j=1}^n (\partial_{\rho^{j}} \tilde{g}_i) (\rhobf) \bar{\eta}^j (0).\]
Direct calculations yield that the time integral in \eqref{martingale1} equals $\A_t^{i,N} (H) + \B_t^{i,N} (H)$, where
\begin{align*}
\A_t^{i,N} (H) &= \int_0^t \Big\{ \frac{N^\alpha}{\sqrt{N}} \sum_{x,y \in \Z} p(y-x) \tau_x V^i (\etabf_s) \Big[ H_{\lambda,s} \big(\tfrac{y}{N}\big) - H_{\lambda,s} \big(\tfrac{x}{N}\big)\Big] \Big\} \,ds\\
&= \int_0^t \Big\{ \frac{1}{\sqrt{N}} \sum_{x \in \Z}  \tau_x V^i  (\etabf_s) (\mc{U}_N H_{\lambda,s}) \big(\tfrac{x}{N}\big) \Big\} \,ds
\end{align*}
and
\begin{multline*}
\B_t^{i,N} (H)
	= \int_0^t \Big\{ \frac{N^\alpha}{\sqrt{N}} \sum_{x,y \in \Z} \sum_{j=1}^n p(y-x)   (\partial_{\rho^{j}} \tilde{g}_i) (\rhobf)\bar{\eta}^j_s (x) \Big[ H_{\lambda,s} \big(\tfrac{y}{N}\big) - H_{\lambda,s} \big(\tfrac{x}{N}\big)\Big] \\-  \frac{\lambda m_N^\alpha}{N^{3/2}} \sum_{x \in \Z} \bar{\eta}^i_s (x) H_{\lambda,s}^\prime \big(\tfrac{x}{N}\big) \Big\} \,ds,
\end{multline*}
and that
\begin{equation}\label{qv}
	\<\mc{M}_\cdot^{i,N} (H)\>_t = \int_0^t \Big\{ N^{\alpha-1} \sum_{x,y \in \Z} \big[ g_i (\etabf_s (x)) p (y-x) + g_i (\etabf_s (y)) p (x-y)\big] \big[ H_{\lambda,s} \big(\tfrac{y}{N}\big) - H_{\lambda,s} \big(\tfrac{x}{N}\big)\big]^2 \Big\} \,ds.
\end{equation}
Therefore,  to prove the tightness of the sequence $\{\mathcal{Y}^{i,N}_t (H),\,0 \leq t \leq T\}_{N \geq 1}$, we only need to prove the tightness of the following sequences,
\begin{align*}
	\{\mathcal{Y}^{i,N}_0 (H)\}_{N \geq 1}, \quad 	&\{\mc{M}_t^{i,N} (H), 0 \leq t \leq T\}_{N \geq 1},\\
	\{\B_t^{i,N} (H), 0 \leq t \leq T\}_{N \geq 1}, \quad &\{\A_t^{i,N} (H), 0 \leq t \leq T\}_{N \geq 1}.
\end{align*}

\subsection{Tightness of the  sequence $\{\mathcal{Y}^{i,N}_0 (H)\}_{N \geq 1}$.}  Since the invariant measure $\nu_{\rhobf}$ is product, by calculating the characteristic function of the initial density fluctuation field (see \cite[Lemma 11.2.1]{klscaling} for example), one could prove directly that $\mathcal{Y}^{N}_0 $ converges in distribution to $\mathcal{Y}_0 $, which  is Gaussian distributed with mean zero  and covariance
\[ \cov_{\nu_{\rhobf}} \Big(\mathcal{Y}^i_0(H),\mathcal{Y}^j_0 (G)\Big) = \Gamma_{i,j} (\rhobf) \<H,G\>, \quad H,G \in \mathcal{S} (\R).\]
In particular, the sequence $\{\mathcal{Y}^{i,N}_0 (H)\}_{N \geq 1}$ is tight.

\subsection{Tightness of the sequence $\{\mc{M}_t^{i,N} (H), 0 \leq t \leq T\}_{N \geq 1}$.}  This follows immediately from the following result.

\begin{lemma}\label{lem:ConvMart}
The sequence of martingales $\{\mc{M}_t^{i,N} (H), 0 \leq t \leq T\}_{N \geq 1}$ converges in distribution, as $N \rightarrow \infty$, to $\{\mc{M}_t^{i} (H), 0 \leq t \leq T\}$, which is a Brownian motion with variance
\begin{equation}\label{var}
4 \tilde{g}_i (\rhobf) \<H,(-\mc{S})H\> = (c_++c_-) \tilde{g}_i (\rhobf)  \iint_{\R^2} \frac{\big( H (u) - H (v) \big)^2}{|u-v|^{1+\alpha}}  \,du \,dv.
\end{equation}
Moreover, for $H,G \in \mc{S} (\R)$, the cross variation between  $\mc{M}_t^{i} (H)$ and $\mc{M}_t^{j} (G)$ vanishes if $i \neq j$.
\end{lemma}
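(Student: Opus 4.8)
The strategy is to apply the functional martingale central limit theorem. Since $\mc M_0^{i,N}(H)=0$, it suffices to verify two things: (a) for each fixed $t\in[0,T]$, the quadratic variation $\<\mc M_\cdot^{i,N}(H)\>_t$ of \eqref{qv} converges in probability to the \emph{deterministic} limit $4\tilde g_i(\rhobf)\<H,(-\mc S)H\>\,t$ (i.e. $t$ times the quantity in \eqref{var}), and (b) the jumps are asymptotically negligible, $\E_{\nu_{\rhobf}}\big[\sup_{0\le t\le T}|\mc M_t^{i,N}(H)-\mc M_{t-}^{i,N}(H)|\big]\to0$. Point (b) is immediate: a jump of $\mc Y^{i,N}_t$ is produced by an $i$--type particle moving from some $x$ to some $y$, which gives an increment $N^{-1/2}\big(H_{\lambda,t}(\tfrac yN)-H_{\lambda,t}(\tfrac xN)\big)$ of modulus at most $2\|H\|_\infty N^{-1/2}$, and $\mc M_t^{i,N}(H)$ has the same jumps.

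For (a) I would split $\<\mc M_\cdot^{i,N}(H)\>_t=E_t^N+R_t^N$, where $E_t^N$ is the $\nu_{\rhobf}$--expectation of the integrand in \eqref{qv} and $R_t^N$ the centered remainder. Stationarity gives $E_{\nu_{\rhobf}}[g_i(\etabf(0))]=\tilde g_i(\rhobf)$, and using $p(z)+p(-z)=(c_++c_-)|z|^{-1-\alpha}$ together with the change of variables $z=y-x$,
\[
E_t^N=\tilde g_i(\rhobf)(c_++c_-)\,N^{\alpha-1}\int_0^t\sum_{x\in\Z}\sum_{z\ne0}\frac{\big(H_{\lambda,s}(\tfrac{x+z}{N})-H_{\lambda,s}(\tfrac xN)\big)^2}{|z|^{1+\alpha}}\,ds .
\]
The inner double sum is a Riemann sum in $u=x/N$, $v=z/N$: since $N^{\alpha-1}|z|^{-1-\alpha}=N^{-2}|v|^{-1-\alpha}$ and $\sum_x\sum_z\sim N^2\iint du\,dv$, and since the time--dependent shift defining $H_{\lambda,s}$ only translates the domain of integration, this converges, by the kernel estimates of \cite[Proposition 2.2]{gonccalves2018density} underlying \eqref{eqn0}--\eqref{eqn1}, to
\[
E_t^N\;\xrightarrow[N\to\infty]{}\;\tilde g_i(\rhobf)(c_++c_-)\,t\iint_{\R^2}\frac{\big(H(u+v)-H(u)\big)^2}{|v|^{1+\alpha}}\,du\,dv ,
\]
which after a change of variables is exactly $4\tilde g_i(\rhobf)\<H,(-\mc S)H\>\,t$. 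Here $\alpha<2$ is essential: it is what makes $\sum_{1\le|z|\le N}|z|^{1-\alpha}=O(N^{2-\alpha})$ and the limiting integral finite.

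It then remains to show $R_t^N\to0$. Writing $\bar V^i(\etabf(x))=g_i(\etabf(x))-\tilde g_i(\rhobf)$ and regrouping the two terms of \eqref{qv}, one has $R_t^N=\int_0^t\sum_{x\in\Z}\bar V^i(\etabf_s(x))\,\phi_s^N(x)\,ds$ with non-negative weights $\phi_s^N(x)=N^{\alpha-1}\sum_{z\ne0}\big(p(z)w_s(x,z)+p(-z)w_s(x-z,z)\big)$, where $w_s(x,z)=\big(H_{\lambda,s}(\tfrac{x+z}{N})-H_{\lambda,s}(\tfrac xN)\big)^2$. Splitting the sum over $z$ at $|z|=N$ and using $|H_{\lambda,s}(\tfrac{x+z}{N})-H_{\lambda,s}(\tfrac xN)|^2\le\big(\|H'\|_\infty^2 z^2/N^2\big)\wedge 4\|H\|_\infty^2$, one checks $\sup_x\phi_s^N(x)=O(N^{-1})$ uniformly in $s\in[0,T]$, while $\sum_x\phi_s^N(x)=O(1)$ is precisely the bound behind $E_t^N$; hence $\sum_x\phi_s^N(x)^2\le\big(\sup_x\phi_s^N(x)\big)\sum_x\phi_s^N(x)=O(N^{-1})$. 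Since $\nu_{\rhobf}$ is a product measure and $\tau_x\bar V^i$ is supported at the single site $x$, the family $\{\bar V^i(\etabf_s(x))\}_{x\in\Z}$ is independent with common finite variance $\sigma_i^2:=\vari_{\nu_{\rhobf}}(g_i(\etabf(0)))$ (finite since the Lipschitz bound on $g_i$ forces at most linear growth and $\nu_{\rhobf}$ has exponential moments). Cauchy--Schwarz in time then gives $\E_{\nu_{\rhobf}}[(R_t^N)^2]\le T\int_0^T\sigma_i^2\sum_x\phi_s^N(x)^2\,ds=O(N^{-1})\to0$. Thus $\<\mc M_\cdot^{i,N}(H)\>_t$ converges in $L^2$, hence in probability, to the deterministic limit for each $t$; since that limit is continuous and increasing in $t$, the convergence is in fact uniform on $[0,T]$, and the martingale central limit theorem yields the Brownian limit with variance \eqref{var}.

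For the orthogonality of the limits, note that for $i\ne j$ the predictable cross-variation $\<\mc M_\cdot^{i,N}(H),\mc M_\cdot^{j,N}(G)\>_t$ is the compensator of $\sum_{s\le t}\Delta\mc M_s^{i,N}(H)\,\Delta\mc M_s^{j,N}(G)$, and an $i$--type jump leaves $\mc Y^{j,N}$ (hence $\mc M^{j,N}$) unchanged and vice versa, so every such product vanishes and the cross-variation is identically zero. Running the same argument for arbitrary finite linear combinations $\sum_i a_i\mc M_\cdot^{i,N}(H_i)$ — whose quadratic variation converges to $\sum_i a_i^2\,4\tilde g_i(\rhobf)\<H_i,(-\mc S)H_i\>\,t$, with no cross terms — then gives joint convergence to a vector of independent Brownian motions. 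I expect the real work to lie in the two quadratic-variation estimates: the Riemann-sum passage to the limit for the $\alpha$--stable kernel, uniform in the moving reference frame, and the weight bound $\sup_x\phi_s^N(x)=O(N^{-1})$ that forces the centered remainder $R_t^N$ to vanish; everything else is soft.
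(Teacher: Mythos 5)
Your proposal is correct and follows essentially the same route as the paper: a martingale FCLT (the paper cites Whitt) verified through the uniformly small jumps, bounded by $2\|H\|_{L^\infty(\R)}N^{-1/2}$, and the convergence of the quadratic variation to the deterministic limit $4t\tilde g_i(\rhobf)\<H,(-\Scal)H\>$, where the fluctuation of the quadratic variation around its mean vanishes because $\nu_{\rhobf}$ is product, so its variance reduces to an $\ell^2$ bound on the site weights. The only (cosmetic) differences are that the paper bounds that $\ell^2$ sum by direct integral estimates splitting $|v|\le 1$ and $|v|>1$ (getting $O(N^{-2})$) rather than your $\sup\times\ell^1$ bound ($O(N^{-1})$, which also suffices), and it kills the cross variation by a generator computation instead of matching jumps of the two species.
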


\begin{proof}
For the first statement, by \cite[Theorem 2.1]{whitt2007proofs}, we only need to prove
	\begin{enumerate}
		\item for any $\varepsilon > 0$, 
		\begin{equation}\label{tightM1}
			\lim_{N \rightarrow \infty} \bb{P}_{\nu_{\rhobf}} \Big( \sup_{0 \leq t \leq T} \big| \mc{M}_t^{i,N} (H) - \mc{M}_{t-}^{i,N} (H) \big| > \varepsilon \Big) = 0;
		\end{equation}
	\item for any $0 \leq t \leq T$,
	\begin{equation}\label{tightM2}
		\lim_{N \rightarrow \infty} \<\mc{M}_\cdot^{i,N} (H)\>_t = 4 t \tilde{g}_i (\rhobf) \<H,(-\mc{S})H\>
	\end{equation}
in distribution. 
	\end{enumerate}
Since  there is a jump in the martingale if and only if there is one in the density fluctuation field, and at most one particle could jump at any time,
\[\sup_{0 \leq t \leq T} \big| \mc{M}_t^{i,N} (H) - \mc{M}_{t-}^{i,N} (H) \big| = \sup_{0 \leq t \leq T} \big| \mc{Y}_t^{i,N} (H) - \mc{Y}_{t-}^{i,N} (H) \big| \leq \frac{2 \|H\|_{L^\infty (\R)}}{\sqrt{N}},\]
which proves \eqref{tightM1}.  By \eqref{qv}, 
\[\lim_{N \rightarrow \infty} \E_{\nu_{\rhobf}} \big[ \<\mc{M}_\cdot^{i,N} (H)\>_t \big]  = 4 t \tilde{g}_i (\rhobf) \<H,(-\mc{S})H\>\]
and by Cauchy-Schwarz inequality,
\begin{multline*}
	{\rm Var}_{\nu_{\rhobf}} \Big(\<\mc{M}_\cdot^{i,N} (H)\>_t\Big) \leq C(\rhobf) t^2 N^{2\alpha-2}  \Big\{  \sum_{x \in \Z}   \Big( \sum_{y \in \Z} p(y-x) \big[ H \big(\tfrac{y}{N}\big) - H \big(\tfrac{x}{N}\big)\big]^2 \Big)^2 \\
	+ \sum_{y \in \Z} \Big( \sum_{x \in \Z} p(x-y) \big[ H \big(\tfrac{y}{N}\big) - H \big(\tfrac{x}{N}\big)\big]^2 \Big)^2 \Big\}\\
	= 2 C(\rhobf) t^2 N^{2\alpha-2}  \sum_{x \in \Z}   \Big( \sum_{y \in \Z} p(y-x) \big[ H \big(\tfrac{y}{N}\big) - H \big(\tfrac{x}{N}\big)\big]^2 \Big)^2 .
\end{multline*}
Re-index $y$ by $x+y$, we bound the last line by 
\begin{multline}\label{eqn7}
C(\rhobf) t^2 N^{-2}  \int_{\tfrac{1}{N}}^\infty \Big( \int_{\tfrac{1}{N}}^\infty  \frac{\big(H(u+v) -H(u)\big)^2}{|v|^{1+\alpha}} \,dv \Big)^2\,du\\
\leq C(\rhobf) t^2 N^{-2}  \int_{\tfrac{1}{N}}^\infty \Big( \int_{\tfrac{1}{N}}^1  \frac{\big(H(u+v) -H(u)\big)^2}{|v|^{1+\alpha}} \,dv \Big)^2\,du\\
+  C(\rhobf) t^2 N^{-2}  \int_{\tfrac{1}{N}}^\infty \Big( \int_{1}^\infty  \frac{\big(H(u+v) -H(u)\big)^2}{|v|^{1+\alpha}} \,dv \Big)^2\,du.
\end{multline}
For $H \in \mc{S}(\R)$, denote
\[\|H (u)\|_{1,\infty} = \sup_{|v-u| \leq 1} |H(v)|.\]
We bound the penultimate line in \eqref{eqn7} by
\[C(\rhobf) t^2 N^{-2} \int_{\tfrac{1}{N}}^\infty  \|H^\prime (u)\|_{1,\infty}^2  \,du \Big( \int_{\tfrac{1}{N}}^1  \frac{1}{|v|^{\alpha-1}} \,dv \Big)^2 \leq C(\rhobf,H,\alpha) t^2 N^{-2}\]
since $\alpha \leq 3/2$. By using  Cauchy-Schwarz inequality twice, we bound the last line in \eqref{eqn7} by
\[C(\rhobf) t^2 N^{-2}  \Big(\int_{1}^\infty \frac{1}{|v|^{1+\alpha}} \,dv \Big) \int_{\tfrac{1}{N}}^\infty \int_{1}^\infty \frac{H(u+v)^4 + H(u)^4}{|v|^{1+\alpha}} \,dv\,du  \leq C(\rhobf,H,\alpha) t^2 N^{-2} \] 
since $\alpha > 0$. This completes the proof of \eqref{tightM2}.

For the second statement, since an $i$--type and a $j$--type particle cannot jump simultaneously,  direct calculations show that
\begin{multline*}
	\<\mc{M}^{i,N}_\cdot (H), \mc{M}^{j,N}_\cdot (G)\>_t = \int_0^t \Big\{ N^\alpha \gen \big(\mathcal{Y}^{i,N}_s (H)  \mathcal{Y}^{j,N}_s (G)\big) -  \mathcal{Y}^{i,N}_s (H) N^\alpha \gen \mathcal{Y}^{j,N}_s (G) \\- \mathcal{Y}^{j,N}_s (G) N^\alpha \gen \mathcal{Y}^{i,N}_s (H)  \Big\} ds = 0.
\end{multline*}
We conclude the proof by letting $N \rightarrow \infty$.
\end{proof}

\subsection{Tightness of the sequence $\{\B_t^{i,N} (H), 0 \leq t \leq T\}_{N \geq 1}$.}\label{subsec:tightB} By Kolmogorov-Centsov's tightness criterion (see \cite[Proposition 4.3]{gonccalves2018density} for example), we only need  to show
\[\E_{\nu_{\rhobf}} \Big[ \big| \B_t^{i,N} (H) - \B_s^{i,N} (H)\big|^a \Big] \leq C |t-s|^{1+b}\]
for some constants $C,a,b > 0$. 

If $0 < \alpha < 1$, recall $m_N^\alpha = 0$. Thus, we could rewrite 
\[\B_t^{i,N} (H) = \int_0^t \sum_{j=1}^n \big(\partial_{\rho^{j}} \tilde{g}_i \big) (\rhobf) \mc{Y}_s^{j,N} \big(\Lcal_N H\big) ds.\]
If $1 \leq \alpha \leq 3/2$, since the density  $\rhobf$ satisfies Assumption \ref{assump:frame}, we have 
\[\B_t^{i,N} (H) = \lambda \int_0^t  \mc{Y}_s^{i,N} \big(\Lcal_N H \big) ds.\]
In both cases, by Cauchy-Schwarz inequality and stationary of the process,
\[\E_{\nu_{\rhobf}} \Big[ \big| \B_t^{i,N} (H) - \B_s^{i,N} (H)\big|^2 \Big] \leq C(\rhobf,n) |t-s|^{2} \frac{1}{N} \sum_{x \in \Z}  \big(\Lcal_N H (\tfrac{x}{N})\big)^2 \leq C(\rhobf,n) \|\Lcal H\|^2_{L^2 (\R)} |t-s|^{2}.\]
In the last inequality, we use \eqref{eqn0} and \eqref{eqn1}. Thus, the sequence $\{\B_t^{i,N} (H), 0 \leq t \leq T\}_{N \geq 1}$ is tight.

\subsection{Tightness of the sequence $\{\A_t^{i,N} (H), 0 \leq t \leq T\}_{N \geq 1}$.}\label{subsec:tightA} We first state a technical lemma before proving the tightness of the sequence $\{\A_t^{i,N} (H), 0 \leq t \leq T\}_{N \geq 1}$. 

\begin{lemma}\label{lem1}
Recall the definition of the operator $\mc{U}_N$ in the beginning of this section. For any $H \in \mathcal{S} (\R)$, there exists some constant $C=C(H,c_+,c_-,\alpha)$ such that
\[\frac{1}{N} \sum_{x \in \Z} \big| \mc{U}_N H \big(\tfrac{x}{N} \big) \big|  \leq C \gamma_\alpha (N),  \qquad \frac{1}{N} \sum_{x \in \Z} \Big[ \mc{U}_N H \big(\tfrac{x}{N}\big)\Big]^2 \leq C \gamma_\alpha (N)^2,\]
where
\[\gamma_\alpha (N) = \begin{cases}
	1, &\quad 0<  \alpha < 1;\\
	\log N, &\quad \alpha=1;\\
	N^{\alpha-1}, &\quad \alpha > 1. 
\end{cases}\]
\end{lemma}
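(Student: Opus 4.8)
The plan is to split $\mc{U}_N H(\tfrac{x}{N}) = N^\alpha\sum_{y\neq 0}p(y)\big[H(\tfrac{x+y}{N}) - H(\tfrac{x}{N})\big]$ into a near part ($0 < |y| \le N$) and a far part ($|y| > N$), and to Taylor-expand in the near part. Writing $p(y) = c_{\mathrm{sgn}(y)}|y|^{-1-\alpha}$, for $0 < |y| \le N$ one has
\[
H\big(\tfrac{x+y}{N}\big) - H\big(\tfrac{x}{N}\big) = \tfrac{y}{N}\,H'\big(\tfrac{x}{N}\big) + R(x,y),\qquad |R(x,y)| \le \tfrac{y^2}{2N^2}\,\big\|H''\big(\tfrac{x}{N}\big)\big\|_{1,\infty},
\]
where $\|G(u)\|_{1,\infty} = \sup_{|v-u|\le 1}|G(v)|$ and the bound on $R$ uses that the intermediate point lies within distance $1$ of $\tfrac{x}{N}$ since $|y|\le N$. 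Summing the linear term against $N^\alpha p(y)$ over $0 < |y| \le N$ gives $N^{\alpha-1}(c_+ - c_-)\,H'(\tfrac{x}{N})\sum_{y=1}^N y^{-\alpha}$; summing the remainder gives a term bounded by $C\,N^{\alpha-2}\,\big\|H''(\tfrac{x}{N})\big\|_{1,\infty}\sum_{y=1}^N y^{1-\alpha}$. For the far part I would bound $|H(\tfrac{x+y}{N}) - H(\tfrac{x}{N})| \le |H(\tfrac{x+y}{N})| + |H(\tfrac{x}{N})|$ and keep the two pieces separate, the point being that the far kernel $K_N(y) := N^\alpha p(y)\mathbf{1}_{\{|y|>N\}}$ has total mass $\sum_y K_N(y) \le C(c_+,c_-,\alpha)$ because $\alpha > 0$.

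The second ingredient is the elementary comparison of power sums with integrals: $\sum_{y=1}^N y^{-\alpha} \le C(1 + N^{1-\alpha})$ for $\alpha \neq 1$ and $\le C\log N$ for $\alpha = 1$, so $N^{\alpha-1}\sum_{y=1}^N y^{-\alpha} \le C\gamma_\alpha(N)$ in each of the regimes $0 < \alpha < 1$, $\alpha = 1$, $\alpha > 1$; this is exactly where $\gamma_\alpha(N)$ enters, and it reflects the absence of a drift subtraction in $\mc{U}_N$ (in contrast with $\Lcal_N$). On the other hand $\sum_{y=1}^N y^{1-\alpha} \le C N^{2-\alpha}$ since $\alpha < 2$, so the remainder term above is $\le C\,\big\|H''(\tfrac{x}{N})\big\|_{1,\infty}$. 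Combining, one obtains the pointwise bound
\[
\big|\mc{U}_N H\big(\tfrac{x}{N}\big)\big| \le C\gamma_\alpha(N)\,\big|H'\big(\tfrac{x}{N}\big)\big| + C\,\big\|H''\big(\tfrac{x}{N}\big)\big\|_{1,\infty} + C\sum_{y} K_N(y)\,\big|H\big(\tfrac{x+y}{N}\big)\big| + C\,\big|H\big(\tfrac{x}{N}\big)\big|.
\]

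To conclude the first estimate I would sum this against $\tfrac1N\sum_{x\in\Z}$: the $H'$ term gives $C\gamma_\alpha(N)\cdot\tfrac1N\sum_x|H'(\tfrac{x}{N})|$, which is $\le C\gamma_\alpha(N)$ uniformly in $N$ since $H \in \mathcal{S}(\R)$; the $\|H''\|_{1,\infty}$ and $|H|$ terms are $O(1)$ for the same reason; and the far term is handled by Fubini together with the translation invariance $\tfrac1N\sum_x|H(\tfrac{x+y}{N})| = \tfrac1N\sum_{x'}|H(\tfrac{x'}{N})|$ (bounded uniformly in $N$ and $y$), so it contributes $C\sum_y K_N(y) \le C$. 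Since $\gamma_\alpha(N) \ge 1$ this yields $\tfrac1N\sum_x|\mc{U}_N H(\tfrac{x}{N})| \le C\gamma_\alpha(N)$. For the second estimate I would square the pointwise bound, use $(a+b+c+d)^2 \le 4(a^2+b^2+c^2+d^2)$, and treat each term as before; the only new point is the far term, for which I would apply Cauchy--Schwarz with respect to the finite measure $K_N$, i.e. $\big(\sum_y K_N(y)|H(\tfrac{x+y}{N})|\big)^2 \le \big(\sum_y K_N(y)\big)\sum_y K_N(y)|H(\tfrac{x+y}{N})|^2$, then sum in $x$ and use translation invariance again. This gives $\tfrac1N\sum_x[\mc{U}_N H(\tfrac{x}{N})]^2 \le C\gamma_\alpha(N)^2$ since $\gamma_\alpha(N)^2 \ge 1$. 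The argument involves no genuine obstacle --- the multi-species structure plays no role here, this being in essence the single-species estimate of \cite{gonccalves2018density} for the test function alone; the only thing requiring care is the case bookkeeping in $\alpha$, together with keeping the remainder (which needs $\alpha < 2$) and the far part (which needs $\alpha > 0$) at size $O(1)$.
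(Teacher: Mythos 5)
Your proof is correct and follows essentially the same route as the paper's: a near/far split of the jump range at macroscopic scale $1$, with the far part ($|y|>N$) controlled by the Schwartz decay of $H$ and $\alpha>0$, and the near part producing the factor $\gamma_\alpha(N)$ through $N^{\alpha-1}\sum_{y=1}^N y^{-\alpha}$ (the paper's $\int_{1/N}^1 v^{-\alpha}\,dv$). The only cosmetic differences are that you Taylor-expand to second order and keep discrete sums, while the paper uses the first-order bound $|H(u+v)-H(u)|\le |v|\,\|H'(u)\|_{1,\infty}$ after comparing the sums with integrals; both yield the same estimates.
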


\begin{proof}
For the first inequality in the lemma, we bound the left side by a constant multiple of 
\begin{equation}\label{eqn2}
	\iint_{\{|v|>1,u \in \R\}} \frac{|H(u+v)-H(u)|}{|v|^{1+\alpha}} \,du\,dv + \iint_{\{N^{-1} \leq |v| \leq 1,u \in \R\}} \frac{|H(u+v)-H(u)|}{|v|^{1+\alpha}} \,du\,dv.
\end{equation}
Since $H \in \mathcal{S} (\R)$ and $\alpha > 0$, the first term in \eqref{eqn2} is bounded by some constant.  We bound the second term in \eqref{eqn2} by
\[2 \int_\R \|H^\prime (u)\|_{1,\infty} du \int_{\tfrac{1}{N}}^1 v^{-\alpha}\,dv \leq C \gamma_\alpha (N). \]

The second inequality in the lemma could be proved in the same way. By Cauchy-Schwarz inequality, we bound the left side by a constant multiple of 
\begin{multline*}
	\int_{\R} \Big\{ \int_{1}^\infty \frac{|H(u+v)-H(u)|}{|v|^{1+\alpha}} \,dv \Big\}^2\,du + 	\int_{\R} \Big\{ \int_{\tfrac{1}{N}}^1 \frac{|H(u+v)-H(u)|}{|v|^{1+\alpha}} \,dv \Big\}^2\,du \\
	\leq C + C \int_\R \|H^\prime (u)\|_{1,\infty}^2 du \Big\{\int_{\tfrac{1}{N}}^1 |v|^{-\alpha}\,dv\Big\}^2 \leq C \gamma_\alpha (N)^2.
\end{multline*}
This proves the lemma.
\end{proof}

\subsubsection{The case $0 < \alpha \leq 1$.} Recall the expression of $\A_t^{i,N} (H)$ in Subsection \ref{subsec:martingale}. On one hand, since $\tilde{V}^i (\rhobf) = 0$, by using \eqref{bg1} and Lemma \ref{lem1}, we bound
\begin{multline}\label{eqn3}
\E_{\nu_{\rhobf}} \Big[ \big| \A_t^{i,N} (H) - \A_s^{i,N} (H)\big|^2 \Big] \\
\leq  \frac{C(\rhobf,\alpha)}{N}  \Big\{ \frac{|t-s|\ell^\alpha}{N^{\alpha-1}} \Big( \frac{1}{N} \sum_{x \in \Z} \big[ \mc{U}_N H \big(\tfrac{x}{N}\big)\big]^2\Big)  + \frac{|t-s|^2 N^2}{\ell^2} \Big(\frac{1}{N} \sum_{x \in \Z} \big| \mc{U}_N H \big(\tfrac{x}{N} \big) \big|\Big)^2\Big\}\\
\leq C(\rhobf,\alpha,H) \gamma_\alpha (N)^2 \Big\{ \frac{|t-s| \ell^\alpha}{N^\alpha} + \frac{|t-s|^2N}{\ell^2}\Big\}.
\end{multline}
On the other hand, by Cauchy-Schwarz inequality and Lemma \ref{lem1}, we bound
\begin{equation}\label{eqn4}
\E_{\nu_{\rhobf}} \Big[ \big| \A_t^{i,N} (H) - \A_s^{i,N} (H)\big|^2 \Big] \leq \frac{C(\rhobf) |t-s|^2}{N} \sum_{x \in \Z} \big[ \mc{U}_N H \big(\tfrac{x}{N}\big)\big]^2 \leq C(\rhobf,\alpha,H) \gamma_\alpha (N)^2 |t-s|^2.
\end{equation}
If $N^{(1+\alpha)/(2+\alpha)} |t-s|^{1/(2+\alpha)} \geq 1$, we use \eqref{eqn3} and take $\ell = N^{(1+\alpha)/(2+\alpha)} |t-s|^{1/(2+\alpha)}$, then
\[\E_{\nu_{\rhobf}} \Big[ \big| \A_t^{i,N} (H) - \A_s^{i,N} (H)\big|^2 \Big] \leq  C(\rhobf,\alpha,H)  \gamma_\alpha (N)^2 N^{-\tfrac{\alpha}{2+\alpha}} |t-s|^{1+\tfrac{\alpha}{2+\alpha}}.\]
Otherwise,  we use \eqref{eqn4} and bound
\[\E_{\nu_{\rhobf}} \Big[ \big| \A_t^{i,N} (H) - \A_s^{i,N} (H)\big|^2 \Big] \leq  C(\rhobf,\alpha,H)  \gamma_\alpha (N)^2 N^{-\tfrac{2 (1 +\alpha)}{2+\alpha}} |t-s|^{1+\tfrac{\alpha}{2+\alpha}}.\]
In both cases, 
\[\sup_{N \geq 1} \E_{\nu_{\rhobf}} \Big[ \big| \A_t^{i,N} (H) - \A_s^{i,N} (H)\big|^2 \Big] \leq  C(\rhobf,\alpha,H) |t-s|^{1+\tfrac{\alpha}{2+\alpha}}.\]
Therefore, by Kolmogorov-Centsov's tightness criterion, the sequence $\{\A_t^{i,N} (H), 0 \leq t \leq T\}_{N \geq 1}$ is tight if $0< \alpha \leq 1$ .

\subsubsection{The case $\alpha > 1$} Since $\nabla \tilde{V}^i (\rhobf) = 0$,  by \eqref{bg2}, Cauchy-Schwarz inequality and stationary of the process, we bound 
\begin{multline}\label{eqn6}
	\E_{\nu_{\rhobf}} \Big[ \big| \A_t^{i,N} (H) - \A_s^{i,N} (H)\big|^2 \Big] \leq \frac{C (\rhobf,\alpha)}{N} \Big\{ \\ |t-s|^2  \E_{\nu_{\rhobf}} \Big[
	\Big(\frac{1}{2} \sum_{x \in \Z} \Big[
	 \sum_{j,k=1}^n (\partial_{\rho^{j}} \partial_{\rho^{k}}  \tilde{V}^i) (\rhobf) \big\{\bar{\eta}^{j,(\ell)} (x) \bar{\eta}^{k,(\ell)} (x) - \tfrac{\Gamma_{j,k} (\rhobf)}{2\ell+1}  \big\}\Big] \mc{U}_N H_{\lambda,s} \big(\tfrac{x}{N}\big)  \Big)^2 \Big] \\ + \frac{|t-s|\beta_\alpha (\ell)}{N^{\alpha-1}} \Big(\frac{1}{N} \sum_{x \in \Z} \mc{U}_N H  \big(\tfrac{x}{N}\big)^2\Big) + \frac{|t-s|^2 N^2}{ \ell^3} \Big(\frac{1}{N} \sum_{x \in \Z} \mc{U}_N H  \big(\tfrac{x}{N}\big) \Big)^2 \Big\}.
\end{multline}
By Cauchy-Schwarz inequality and translation invariance, we bound
\begin{multline*}
	\E_{\nu_{\rhobf}} \Big[
	\Big(\frac{1}{2} \sum_{x \in \Z} \Big[
	\sum_{j,k=1}^n (\partial_{\rho^{j}} \partial_{\rho^{k}}  \tilde{V}^i) (\rhobf) \big\{\bar{\eta}^{j,(\ell)} (x) \bar{\eta}^{k,(\ell)} (x) - \tfrac{\Gamma_{j,k} (\rhobf)}{2\ell+1}  \big\}\Big] \mc{U}_N H_{\lambda,s} \big(\tfrac{x}{N}\big)  \Big)^2 \Big] \\
	\leq  C (\rhobf)
   \Big\{  \sum_{x \in \Z} \sum_{j,k=1}^n 
	\E_{\nu_{\rhobf}} \big[ \big\{\bar{\eta}^{j,(\ell)} (0) \bar{\eta}^{k,(\ell)} (0) - (2\ell+1)^{-1} \Gamma_{j,k} (\rhobf) \big\}^2\big]^{1/2} \big| \mc{U}_N H  \big(\tfrac{x}{N}\big)\big| \Big\}^2 \\
	 \leq   \frac{C (\rhobf,n) N^2}{\ell^2}   \Big\{  \frac{1}{N}\sum_{x \in \Z}  \big| \mc{U}_N H  \big(\tfrac{x}{N}\big) \big| \Big\}^2.
\end{multline*}
By Lemma \ref{lem1}, since $\ell \geq 1$, we bound
\begin{multline}\label{eqn5}
	\E_{\nu_{\rhobf}} \Big[ \big| \A_t^{i,N} (H) - \A_s^{i,N} (H)\big|^2 \Big] \leq 
 C (\rhobf,\alpha,H,n) \gamma_\alpha (N)^2 \Big\{ \frac{|t-s|^2 N}{\ell^2} + \frac{|t-s|\beta_\alpha (\ell)}{N^\alpha}  \Big\} \\
 = C (\rhobf,\alpha,H,n) N^{2\alpha-2} \Big\{ \frac{|t-s|^2 N}{\ell^2} + \frac{|t-s| \ell^{\alpha-1} }{N^\alpha}  \Big\}.
\end{multline}
As in the case $0 < \alpha \leq 1$, if $N |t-s|^{1/(1+\alpha)} \geq 1$, take $\ell = N |t-s|^{1/(1+\alpha)} \geq 1$ and use \eqref{eqn5},  then
\[\E_{\nu_{\rhobf}} \Big[ \big| \A_t^{i,N} (H) - \A_s^{i,N} (H)\big|^2 \Big] \leq  C (\rhobf,\alpha,H,n)  N^{2\alpha-3} |t-s|^{\tfrac{2\alpha}{1+\alpha}}.\]
Otherwise, we use \eqref{eqn4} and bound
\[\E_{\nu_{\rhobf}} \Big[ \big| \A_t^{i,N} (H) - \A_s^{i,N} (H)\big|^2 \Big] \leq  C (\rhobf,\alpha,H,n)  N^{2\alpha-4} |t-s|^{\tfrac{2\alpha}{1+\alpha}}.\]
Since $\alpha \leq 3/2$,
\[\sup_{N \geq 1} \E_{\nu_{\rhobf}} \Big[ \big| \A_t^{i,N} (H) - \A_s^{i,N} (H)\big|^2 \Big] \leq C (\rhobf,\alpha,H,n)   |t-s|^{\tfrac{2\alpha}{1+\alpha}}.\]
Note that $2\alpha/(1+\alpha) > 1$ if $\alpha > 1$. Therefore,  the sequence $\{\A_t^{i,N} (H), 0 \leq t \leq T\}_{N \geq 1}$ is tight if $1 < \alpha \leq 3/2$.

\section{Characterization of the limit points}\label{sec:clp}

In the last section, we have shown the tightness of the sequences $\{\mc{Y}^N_t\}$, $\{\mc{M}^N_t\}$, $\{\A^N_t\}$ and $\{\B^N_t\}$.  Denote $\{\mc{Y}_t\}$, $\{\mc{M}_t\}$, $\{\mc{A}_t\}$ and $\{\mc{B}_t\}$ any limit point of the above terms along some subsequence.  Since the zero range process is stationary, for any $0 \leq t \leq T$, $\mc{Y}_t$ is also stationary. In the rest of this section, we characterize the martingale $\{\mc{M}_t\}$.

\subsection{The case $0 < \alpha < 1$.} Take $H \in \mc{S} (\R)$. By the computations in Section \ref{sec:tightness}, 
\begin{equation}\label{clp1}
\mc{M}_t^{i,N} (H) = \mc{Y}_t^{i,N} (H) - \mc{Y}_0^{i,N} (H) - \A_t^{i,N} (H) - \int_0^t \sum_{j=1}^n (\partial_{\rho^{j}} \tilde{g}_i) (\rhobf) \mc{Y}_s^{j,N} (\Lcal_N H) \,ds.
\end{equation}
We have shown in \eqref{eqn3} that
\[\E_{\nu_{\rhobf}} \Big[\sup_{0 \leq t \leq T} \big(\A_t^{i,N} (H)\big)^2 \Big] \leq C(\rhobf,\alpha,H) \Big\{  \frac{T\ell^\alpha}{N^\alpha} + \frac{T^2 N}{\ell^2}\Big\}.\]
Taking $\ell = N^{(1+\alpha)/(2+\alpha)}$, we bound
\[\E_{\nu_{\rhobf}} \Big[\sup_{0 \leq t \leq T} \big(\A_t^{i,N} (H)\big)^2 \Big] \leq C(\rhobf,\alpha,H,T) N^{-\tfrac{\alpha}{\alpha+2}}.\]
Therefore, $\A_t^{i,N} (H)$ vanishes in the limit.  For the fourth term on the right side of \eqref{clp1},  note that  for any $1 \leq j \leq n$,
\[\lim_{N \rightarrow \infty} \E_{\nu_{\rhobf}} \Big[ \big\{\mc{Y}_s^{j,N} (\Lcal_N H) - \mc{Y}_s^{j,N} (\Lcal H)\}^2 \Big] \leq \lim_{N \rightarrow \infty} \frac{C(\rhobf)}{N} \sum_{x \in \Z} \big| \mc{L}_N H (\tfrac{x}{N}) - \mc{L} H (\tfrac{x}{N})\big|^2 = 0.\]
In the last identity we use \eqref{eqn0} and \eqref{eqn1}.  Therefore, by Cauchy-Schwarz inequality,  one could replace $\mc{L}_N H$ with $\mc{L} H$ in the fourth term on the right side of \eqref{clp1}.  Passing to the limit, we have
\[\mc{M}_t^{i} (H) = \mc{Y}_t^{i} (H) - \mc{Y}_0^{i} (H)  - \int_0^t \sum_{j=1}^n (\partial_{\rho^{j}} \tilde{g}_i) (\rhobf) \mc{Y}_s^{j} (\mc{L} H) \,ds.\]
By Lemma \ref{lem:ConvMart} and Levy's theorem , $\mc{M}_t^{i} (H)$ is a Brownian motion with variance given by \eqref{var}. Moreover,  for $i \neq j$ and $H,G \in \mc{S} (\R)$, $\mc{M}_t^{i} (H)$ and $\mc{M}_t^{j} (G)$ are independent. This concludes the proof if $0 < \alpha < 1$.

\begin{remark}
	Note that in the Definitions \ref{def:ou} and \ref{def:fburgers}, the test functions are taken to be $\mc{S}(\R)$--valued trajectories $H: [0,T] \rightarrow \R$. However, by the techniques presented in \cite[Subsection 5.1]{gonccalves2018density}, one could extend the results from functions in $\mc{S} (\R)$ to $\mc{S}(\R)$--valued trajectories. Therefore, to make ideas clear, we assume $H \in \mc{S} (\R)$ above and hereafter.
\end{remark}

\subsection{The case $1 \leq  \alpha < 3/2$.} Since $\rhobf$ satisfies the Assumption \ref{assump:frame},
\begin{equation}\label{clp2}
	\mc{M}_t^{i,N} (H) = \mc{Y}_t^{i,N} (H) - \mc{Y}_0^{i,N} (H) - \A_t^{i,N} (H) - \lambda \int_0^t  \mc{Y}_s^{i,N} (\Lcal_N H) \,ds.
\end{equation}
In \eqref{eqn5}, we have shown that\footnote{Note that \eqref{eqn5} also holds for $\alpha = 1$.}
\[\E_{\nu_{\rhobf}} \Big[\sup_{0 \leq t \leq T} \big(\A_t^{i,N} (H)\big)^2 \Big] \leq C (\rhobf,\alpha,H,n) \gamma_\alpha (N)^2 \Big\{ \frac{T^2N}{\ell^2} + \frac{T \beta_\alpha (\ell)}{N^\alpha}\Big\}.\]
Take $\ell = N$, then
\[\E_{\nu_{\rhobf}} \Big[\sup_{0 \leq t \leq T} \big(\A_t^{i,N} (H)\big)^2 \Big] \leq C (\rhobf,\alpha,H,n,T) \big[ N^{2 \alpha -3} + (\log N)^4/N\big].\]
Therefore, $\A_t^{i,N} (H)$  converges to zero as $N \rightarrow \infty$. Replacing  $\Lcal_N H$ with $\Lcal H$ and passing to the limit, we have
\[\mc{M}_t^{i} (H) = \mc{Y}_t^{i} (H) - \mc{Y}_0^{i} (H)  - \lambda \int_0^t  \mc{Y}_s^{i} (\Lcal H) \,ds.\]
We conclude the proof in the case $1 \leq  \alpha < 3/2$ by  Lemma \ref{lem:ConvMart}.

\subsection{The case $\alpha = 3/2$.} In this case, \eqref{clp2} still holds true, but the term  $\A_t^{i,N} (H)$ does not vanish in the limit.  By \eqref{bg2} and \eqref{eqn6},
	\begin{multline}\label{clp3}
	\E_{\nu_{\rhobf}} \Big[\sup_{0 \leq t \leq T} \Big(\A_t^{i,N} (H) \\
	- \frac{1}{2\sqrt{N}} \int_0^t \sum_{x \in \Z}
	 \sum_{j,k=1}^n (\partial_{\rho^{j}} \partial_{\rho^{k}}  \tilde{V}^i) (\rhobf) \big\{\bar{\eta}^{j,(\ell)}_s (x)  \bar{\eta}^{k,(\ell)}_s (x) - 
	 \frac{1}{2\ell+1} \Gamma_{j,k} (\rhobf) \big\} \mc{U}_N H_{\lambda,s} \big(\tfrac{x}{N}\big) ds \Big)^2 \Big] \\
	\leq C(\rhobf,\alpha,H) \Big\{ \frac{T \ell^{1/2}}{N^{1/2}}  + \frac{T^2N^2}{\ell^3} \Big\}.
\end{multline}
From the definitions of $\Lcal_N$ and $\mc{U}_N$, if $\alpha=2/3$,  for $H \in \mc{S} (\R)$, \[\Lcal_N H = \mc{U}_N H -  m N^{1/2} H^\prime.\] 
Note that $(\partial_{\rho^{j}} \partial_{\rho^{k}}  \tilde{V}^i) (\rhobf) = (\partial_{\rho^{j}} \partial_{\rho^{k}}  \tilde{g}^i) (\rhobf)$. By Cauchy-Schwarz inequality,
	\begin{multline}\label{clp6}
	\E_{\nu_{\rhobf}} \Big[\sup_{0 \leq t \leq T} \Big(\A_t^{i,N} (H) 
	- \frac{m}{2} \int_0^t \sum_{x \in \Z}
	\sum_{j,k=1}^n (\partial_{\rho^{j}} \partial_{\rho^{k}}  \tilde{g}_i) (\rhobf) \bar{\eta}^{j,(\ell)}_s (x)  \bar{\eta}^{k,(\ell)}_s (x)  H_{\lambda,s}^\prime \big(\tfrac{x}{N}\big) ds \Big)^2 \Big] \\
	\leq 2 \E_{\nu_{\rhobf}} \Big[\sup_{0 \leq t \leq T} \Big( \frac{1}{2\sqrt{N}} \int_0^t \sum_{x \in \Z}
	\sum_{j,k=1}^n (\partial_{\rho^{j}} \partial_{\rho^{k}}  \tilde{g}_i) (\rhobf) \big\{\bar{\eta}^{j,(\ell)}_s (x)  \bar{\eta}^{k,(\ell)}_s (x) 
	- 
	\frac{\Gamma_{j,k} (\rhobf)}{2\ell+1}  \big\} \Lcal_N H_{\lambda,s} \big(\tfrac{x}{N}\big) ds \Big)^2 \Big]  \\
	+ 2 \E_{\nu_{\rhobf}} \Big[\sup_{0 \leq t \leq T} \Big( \frac{m}{2} \int_0^t \sum_{x \in \Z}
	\sum_{j,k=1}^n  (\partial_{\rho^{j}} \partial_{\rho^{k}}  \tilde{g}_i) (\rhobf) \frac{1}{2\ell+1} \Gamma_{j,k} (\rhobf) H_{\lambda,s}^\prime \big(\tfrac{x}{N}\big) ds \Big)^2 \Big].
\end{multline}
By Cauchy-Schwarz inequality, stationary of the process and \eqref{eqn1}, we bound the first term on the right side of \eqref{clp6} by
\begin{multline*}
 \frac{C (\rhobf)T^2}{N} \Big\{ \sum_{x \in \Z}
\sum_{j,k=1}^n  \E_{\nu_{\rhobf}} \Big[ \Big(\bar{\eta}^{j,(\ell)} (0)  \bar{\eta}^{k,(\ell)} (0) 
- 
\frac{1}{2\ell+1} \Gamma_{j,k} (\rhobf)\Big)^2  \Big]^{1/2} \big|\Lcal_N H \big(\tfrac{x}{N}\big)\big|\Big\}^2 \\
\leq \frac{C (\rhobf,n) T^2 N}{\ell^2} \|\Lcal H\|_{L^1 (\R)}^2.
\end{multline*}
Since 
\[H^\prime \big(\tfrac{x}{N}\big)= N  \big[H\big(\tfrac{x+1}{N}\big)-H\big(\tfrac{x}{N}\big)\big] - \frac{1}{2N} H^{\prime\prime} \big(\tfrac{\xi}{N}\big)\]
for some $\xi \in (x,x+1)$, and 
\[\sum_{x \in \Z} N  \big[H\big(\tfrac{x+1}{N}\big)-H\big(\tfrac{x}{N}\big)\big] = 0,\]
 the second term on the right side of \eqref{clp6} is bounded by
\[\frac{C (\rhobf,n,m)T^2}{\ell^2} \|H^{\prime\prime}\|_{L^1(\R)}^2.\]
Adding up the above estimates,
	\begin{multline*}
	\E_{\nu_{\rhobf}} \Big[\sup_{0 \leq t \leq T} \Big(\A_t^{i,N} (H) 
	- \frac{m}{2} \int_0^t \sum_{x \in \Z}
	\sum_{j,k=1}^n (\partial_{\rho^{j}} \partial_{\rho^{k}}  \tilde{g}_i) (\rhobf) \bar{\eta}^{j,(\ell)}_s (x)  \bar{\eta}^{k,(\ell)}_s (x)  H_{\lambda,s}^\prime \big(\tfrac{x}{N}\big) ds \Big)^2 \Big] \\
	\leq C\Big\{ \frac{T \ell^{1/2}}{N^{1/2}}  + \frac{T^2N^2}{\ell^3} + \frac{T^2 N}{\ell^2}\Big\}.
\end{multline*}
for some constant $C = C (\rhobf,H,n,m)$. Take $\ell = \varepsilon N$ for $\varepsilon > 0$ in the above inequality,
	\begin{multline}\label{clp4}
	\limsup_{N \rightarrow \infty} \E_{\nu_{\rhobf}} \Big[\sup_{0 \leq t \leq T} \Big(\A_t^{i,N} (H) 
	- \frac{m}{2} \int_0^t \sum_{x \in \Z}
	\sum_{j,k=1}^n (\partial_{\rho^{j}} \partial_{\rho^{k}}  \tilde{g}_i) (\rhobf) \bar{\eta}^{j,(\varepsilon N)}_s (x)  \bar{\eta}^{k,(\varepsilon N)}_s (x)  H_{\lambda,s}^\prime \big(\tfrac{x}{N}\big) ds \Big)^2 \Big] \\
	\leq C T \varepsilon^{1/2}.
\end{multline}
For $1 \leq j,k \leq n$, define
\[\mc{A}^{j,k,\varepsilon,N}_t (H) = \int_0^t \frac{1}{N} \sum_{x \in \Z} \tau_{-x} \mc{Y}_s^{j,N} (G_\varepsilon) \tau_{-x}  \mc{Y}_s^{k,N} (G_\varepsilon) H^\prime \big(\tfrac{x}{N}\big) ds.\]
Recall $G_\varepsilon$ is an approximation of the function $\iota(\cdot)$ defined in \eqref{iota}.  Since $\{\mc{Y}_t^N (G_\varepsilon), 0 \leq t \leq T\}_{N \geq 1}$ is tight, the sequence  $\{\mc{A}^{j,k,\varepsilon,N}_t (H), 0 \leq t \leq T\}_{N \geq 1}$ is also tight, and denote $\{\mc{A}^{j,k,\varepsilon}_t (H)\}$ any limit point along some subsequence.  Passing to the limit,
\[\mc{A}^{j,k,\varepsilon}_t (H) = \int_0^t \int_{\R} \tau_{-u} \mc{Y}_s^{j} (G_\varepsilon) \tau_{-u} \mc{Y}_s^{k} (G_\varepsilon) H^\prime (u)\,du\,ds.\]
Note that 
\[\sum_{x \in \Z} \bar{\eta}^{j,(\varepsilon N)}_s (x)  \bar{\eta}^{k,(\varepsilon N)}_s (x)  H_{\lambda,s}^\prime \big(\tfrac{x}{N}\big) = \frac{1}{N}  \sum_{x \in \Z} \tau_{-x} \mc{Y}_s^{j,N} (\iota_\varepsilon) \tau_{-x}  \mc{Y}_s^{k,N} (\iota_\varepsilon) H^\prime \big(\tfrac{x}{N}\big) \]
plus an error term of order $\mathcal{O} (N^{-1})$. Moreover, by Cauchy-Schwarz inequality,
	\begin{multline*}
	\E_{\nu_{\rhobf}} \Big[\sup_{0 \leq t \leq T} \Big(\mc{A}^{j,k,\varepsilon,N}_t (H) - \int_0^t \frac{1}{N}  \sum_{x \in \Z} \tau_{-x} \mc{Y}_s^{j,N} (\iota_\varepsilon) \tau_{-x} \mc{Y}_s^{k,N} (\iota_\varepsilon) H^\prime \big(\tfrac{x}{N}\big) ds 
	 \Big)^2 \Big] \\
	\leq C(\rhobf) T^2 \|H^\prime\|_{L^1 (\R)}^2 \varepsilon^{-1} \|G_\varepsilon - \iota_\varepsilon\|_{L^2 (\R)}^2.
\end{multline*}
Therefore, by \eqref{clp4}, 
	\begin{multline}\label{clp5}
	\limsup_{N \rightarrow \infty} \E_{\nu_{\rhobf}} \Big[\sup_{0 \leq t \leq T} \Big(\A_t^{i,N} (H) 
	- \frac{m}{2} \sum_{j,k=1}^n (\partial_{\rho^{j}} \partial_{\rho^{k}}  \tilde{g}_i) (\rhobf) \mc{A}^{j,k,\varepsilon,N}_t (H) \Big)^2 \Big]  \\
	\leq C\Big\{ T \varepsilon^{1/2} + T^2 \varepsilon^{-1} \|G_\varepsilon - \iota_\varepsilon\|_{L^2 (\R)}^2 \Big\}
\end{multline}
for some constant $C=C(\rhobf,H,n,m)$. Recall the definition of $\mc{A}_t^{i,\varepsilon} (H)$ from \eqref{a_epsilon} and note that
\[\mc{A}_t^{i,\varepsilon} (H) = \sum_{j,k=1}^n (\partial_{\rho^{j}} \partial_{\rho^{k}}  \tilde{g}_i) (\rhobf) \mc{A}^{j,k,\varepsilon}_t (H). \]
By Cauchy-Schwarz inequality and \eqref{clp5}, the sequence $\{\mc{A}_t^{i,\varepsilon} (H), 0 \leq t \leq T\}_{\varepsilon > 0}$ is a uniform Cauchy sequence in the sense of \eqref{energy_cdondition}, equivalently, the process $\{\mc{Y}_t\}$ satisfies the $L^2$ energy condition. In particular, it has a $L^2$ limit  $\{\mc{A}_t^{i} (H), 0 \leq t \leq T\}$ as $\varepsilon \rightarrow 0$.   Passing to the limit in \eqref{clp2}, we have
\[\mc{M}_t^{i} (H) = \mc{Y}_t^{i} (H) - \mc{Y}_0^{i} (H)  - \mc{A}_t^i (H) -\lambda \int_0^t  \mc{Y}_s^{i} (\Lcal H) \,ds.\]
We conclude the proof in the case $\alpha = 3/2$ by  Lemma \ref{lem:ConvMart}.

\bibliographystyle{plain}
\bibliography{zhaoreference.bib}
\end{document}